\newtheorem{algorithm}{Algorithm}
 \def\BIBand{and}%
\newcommand{\rd}{\, \mathrm{d}}
\newcommand{\MC}{\mathrm{MC}}
\newcommand{\TMC}{\mathrm{TMC}}
\newcommand{\Alg}{\mathrm{Alg}}
\newcommand{\bsx}{\boldsymbol{x}}
\newcommand{\tbsx}{\tilde{\boldsymbol{x}}}
\newcommand{\bsy}{\boldsymbol{y}}
\newcommand{\bsgamma}{\boldsymbol{\gamma}}
\newcommand{\bsmu}{\boldsymbol{\mu}}
\newcommand{\RR}{\mathbb{R}}
\newcommand{\NN}{\mathbb{N}}
\newcommand{\EE}{\mathbb{E}}
\newcommand{\VV}{\mathbb{V}}
\newcommand{\ZZ}{\mathbb{Z}}
\newcommand{\Fcal}{\mathcal{F}}
\newcommand{\Ncal}{\mathcal{N}}
\begin{document}

\title{Toeplitz Monte Carlo}

\author{Josef Dick \and Takashi Goda \and Hiroya Murata}

\authorrunning{Dick, Goda, Murata} 

\institute{J. Dick \at School of Mathematics and Statistics, University of New South Wales, Sydney NSW 2052, Australia \\ \email{josef.dick@unsw.edu.au}  \and
           T. Goda \and H. Murata \at School of Engineering, University of Tokyo, 7-3-1 Hongo, Bunkyo-ku, Tokyo 113-8656, Japan\\ \email{goda@frcer.t.u-tokyo.ac.jp} \\ \email{mh22151210@gmail.com}}

\date{Received: date / Accepted: date}

\maketitle

\begin{abstract}
Motivated mainly by applications to partial differential equations with random coefficients, 
we introduce a new class of Monte Carlo estimators, called Toeplitz Monte Carlo (TMC) estimator 
for approximating the integral of a multivariate function with respect to 
the direct product of an identical univariate probability measure. 
The TMC estimator generates a sequence $x_1,x_2,\ldots$ of i.i.d.\ samples 
for one random variable, and then uses $(x_{n+s-1},x_{n+s-2}\ldots,x_n)$ with $n=1,2,\ldots$
 as quadrature points, where $s$ denotes the dimension. 
Although consecutive points have some dependency, 
the concatenation of all quadrature nodes is represented by a Toeplitz matrix, 
which allows for a fast matrix-vector multiplication. 
In this paper we study the variance of the TMC estimator and its dependence on the dimension $s$. 
Numerical experiments confirm the considerable efficiency improvement 
over the standard Monte Carlo estimator for applications to partial differential equations 
with random coefficients, particularly when the dimension $s$ is large. 
\keywords{Monte Carlo \and Toeplitz matrix \and fast matrix-vector multiplication \and 
high-dimensional integration \and PDEs with random coefficients}
\subclass{MSC 65C05}
\end{abstract}

\section{Introduction}
The motivation of this research mainly comes from applications to 
uncertainty quantification for ordinary or partial differential equations with random coefficients. 
The problem we are interested in is to estimate an expectation (integral)
\[ I_{\rho_s}(f):=\int_{\Omega^s}f(\bsx)\rho_s(\bsx)\rd \bsx\quad \text{with} \quad \rho_s(\bsx)=\prod_{j=1}^{s}\rho(x_j), \]
for large $s$ with $\rho$ being the univariate probability density function defined over $\Omega\subseteq \RR$. 
In some applications, the integrand is of the form
\[ f(\bsx) = g(\bsx A), \]
for a matrix $A\in \RR^{s\times t}$ and a function $g\colon \RR^t\to \RR$, see \cite{DKLS15}. 
Here we note that $\bsx$ is defined as a row vector. 
Typically, $\rho$ is given by the uniform distribution on the unit interval $\Omega=[0,1]$, 
or by the standard normal distribution on the real line $\Omega=\RR$.

The standard Monte Carlo method approximates $I_{\rho_s}(f)$ as follows: 
we first generate a sequence of i.i.d.\ samples of the random variables $\bsx\sim \rho_s$:
\[ \bsx_1=(x_{1,1},\ldots,x_{s,1}), \bsx_2=(x_{1,2},\ldots,x_{s,2}),\ldots, \]
and then approximate $I_{\rho_s}(f)$ by
\begin{equation}\label{eq:mc} I^{\MC}_{\rho_s}(f; N)=\frac{1}{N}\sum_{n=1}^{N}f(\bsx_n)=\frac{1}{N}\sum_{n=1}^{N}g(\bsx_nA).\end{equation}
It is well known that 
\[ \EE[I^{\MC}_{\rho_s}(f; N)] = I_{\rho_s}(f) \]
and
\begin{equation}\label{MC:variance}
\VV[I^{\MC}_{\rho_s}(f; N)] =\frac{I_{\rho_s}(f^2)-(I_{\rho_s}(f))^2}{N}, 
\end{equation}
which ensures the canonical ``one over square root of $N$'' convergence.

Now let us consider a situation where computing $\bsx_n A$ for $n=1,\ldots,N$ 
takes a significant amount of time in the computation of $I^{\MC}_{\rho_s}(f; N)$. 
In general, if the matrix $A$ does not have any special structure such as circulant, Hankel, Toeplitz, or Vandermonde, then fast matrix-vector multiplication is not available 
and the computation of $I^{\MC}_{\rho_s}(f; N)$ requires $O(Nst)$ arithmetic operations. Some examples where a fast matrix-vector multiplication has been established are the following: In \cite{FKS18} the authors use $H$-matrices to obtain an approximation of a covariance matrix which also permits a fast matrix vector multiplication; In \cite{GKSW08} the authors show how a (partially) fast matrix vector product can be implemented for multi-asset pricing in finance; Brownian bridge and principle component analysis factorizations of the covariance matrix in finance also permit a fast matrix vector multiplication \citep{GKSW08}. Here we consider the case where either a fast matrix-vector product is not available, or one wants to avoid $H$-matrices and particular covariance factorizations, since we do not impose any restrictions on $A$.

In order to reduce this computational cost, we propose an alternative, novel Monte Carlo estimator in this paper. 
Instead of generating a sequence of i.i.d.\ samples of the vector $\bsx$, 
we generate a sequence of i.i.d.\ samples of a single random variable, denoted by $x_1,x_2,\ldots$, 
and then approximates $I_{\rho_s}(f)$ by
\begin{equation}\label{TMC:est}
I^{\TMC}_{\rho_s}(f; N)=\frac{1}{N}\sum_{n=1}^{N}f(\tbsx_n)
\end{equation}
with
\[ \tbsx_n=(x_{n+s-1},\ldots,x_{n}).\]
The computation of $I^{\TMC}_{\rho_s}(f; N)$ can be done as follows:
\begin{algorithm}\label{alg:tmc} For $N\in \ZZ_{>0}$, let $x_1,x_2,\ldots,x_{N+s-1}$ be $N+s-1$ i.i.d.\ samples of a random variable following $\rho$.
\begin{enumerate}
\item Define $X\in \RR^{N\times s}$ by
\[ X=\begin{pmatrix} x_s & x_{s-1} & x_{s-2} & \cdots & x_1 \\ x_{s+1} & x_s & x_{s-1} & \cdots & x_2 \\ x_{s+2} & x_{s+1} & x_s & \cdots & x_3 \\ \vdots & \vdots & \vdots & \ddots & \vdots \\ x_{N+s-2} & x_{N+s-3} & x_{N+s-4} & \cdots & x_{N-1} \\ x_{N+s-1} & x_{N+s-2} & x_{N+s-3} & \cdots & x_{N} \\\end{pmatrix} .\]
Note that $X$ is a Toeplitz matrix.
\item Compute 
\[ XA=Y=\begin{pmatrix} \bsy_1 \\ \vdots \\ \bsy_N \end{pmatrix} \in \RR^{N\times t}.\] 
\item Then $I^{\TMC}_{\rho_s}(f; N)$ is given by
\[ I^{\TMC}_{\rho_s}(f; N) = \frac{1}{N}\sum_{n=1}^{N}g(\bsy_n). \]
\end{enumerate}
\end{algorithm}
\noindent The idea behind introducing this algorithm comes from a recent paper by \cite{DKLS15} 
who consider replacing the point set used in the standard Monte Carlo estimator \eqref{eq:mc} 
with a special class of quasi-Monte Carlo point sets which permit a fast matrix-vector multiplication $\bsx_n A$ for $n=1,\ldots,N$. This paper considers a sampling scheme different from \cite{DKLS15} while still allowing for a fast matrix-vector multiplication.

When $s$ is quite large, say thousands or million, $N$ has to be set significantly smaller than $2^s$. 
Throughout this paper we consider the case where $N\approx s^\kappa$ for some $\kappa>0$. 
Since the matrix-vector multiplication between a Toeplitz matrix $X$ and 
each column vector of $A$ can be done with $O(N\log s)$ arithmetic operations 
by using the fast Fourier transform \citep{FJ05}, the matrix-matrix multiplication $XA$ 
appearing in the second item of Algorithm~\ref{alg:tmc} can be done with $O(tN\log s)$ arithmetic operations. 
This way the necessary computational cost can be reduced from $O(Nst)$ to $O(tN\log s)$, 
which is the major advantage of using $I^{\TMC}_{\rho_s}(f; N)$.

\begin{remark}\label{parallel}
Here we give some comments about memory requirements and parallel implementation of the estimators. 
For the standard Monte Carlo estimator, since each sample $\bsx_n$ is generated independently, the corresponding function values $f(\bsx_n)=g(\bsx_n A)$ can be computed in parallel.
A required size to keep one vector $\bsx_nA$ in memory until evaluating $g(\bsx_n A)$ is only of order $t$. 
Regarding the TMC estimator, let us assume that $N$ is a multiple of $s$, that is, $N=Ls$ with some $L\in \ZZ_{>0}$. For each $\ell\in \{1,\ldots,L\}$, define a Toeplitz submatrix
\[ X_{\ell}=\begin{pmatrix} x_{\ell s} & x_{\ell s-1} & \cdots & x_{(\ell-1)s+1} \\ x_{\ell s+1} & x_{\ell s} & \cdots & x_{(\ell-1)s+2} \\ \vdots & \vdots & \ddots & \vdots \\ x_{\ell s+s-1} & x_{\ell s+s-2} & \cdots & x_{\ell s} \\\end{pmatrix}\in \RR^{s\times s}. \] 
In fact, it is easy to see that $X=\left(X_1^{\top},\ldots,X_L^{\top}\right)^{\top}$ and 
\[ Y=\left((X_1A)^{\top},\ldots,(X_LA)^{\top}\right)^{\top}\in \RR^{N\times t}. \]
This clearly shows that each $X_{\ell}A$ can be computed in parallel by using the fast Fourier transform. 
Given that the matrix $X_{\ell}A\in \RR^{s\times t}$ has to be kept in memory until evaluating $g(\bsy_n)$ for $n=(\ell-1)s+1,\ldots, \ell s$, the required memory size of the TMC estimator is of order $st$.
\end{remark}

In this paper we call $I^{\TMC}_{\rho_s}(f; N)$ a \emph{Toeplitz Monte Carlo} (TMC) estimator of $I_{\rho_s}(f)$ 
as we rely on the Toeplitz structure of $X$ to achieve a faster computation.\footnote{If the sample nodes are 
given by $\tbsx_n=(x_{n},\ldots,x_{n+s-1})$ instead of $\tbsx_n=(x_{n+s-1},\ldots,x_{n})$, 
the matrix $X$ becomes a Hankel matrix, which also allows for a fast matrix-vector multiplication. 
Therefore we can call our proposal a \emph{Hankel Monte Carlo} (HMC) estimator instead. 
However, in the context of Monte Carlo methods, HMC often refers to the Hamiltonian Monte Carlo algorithm, and we would like to avoid duplication of the abbreviations by coining the name Toeplitz Monte Carlo.} 
In the remainder of this paper, we study the variance of the TMC estimator and its dependence on the dimension $s$, 
and also see practical efficiency of the TMC estimator by carrying out numerical experiments 
for applications from ordinary/partial differential equations with random coefficients.

\section{Theoretical results}
\subsection{Variance analysis}
In order to study the variance of $I^{\TMC}_{\rho_s}(f; N)$, we introduce the concept of the analysis-of-variance (ANOVA) decomposition of multivariate functions \citep{H48,KSWW10,S93}. In what follows, for simplicity of notation, we write $[1:s]=\{1,\ldots,s\}$. For a subset $u\subseteq [1:s]$, we write $-u:=[1:s]\setminus u$ and denote the cardinality of $u$ by $|u|$. Let $f$ be a square-integrable function, i.e., $I_{\rho_s}(f^2)<\infty$. Then $f$ can be decomposed into
\[ f(\bsx) = \sum_{u\subseteq [1:s]}f_u(\bsx_u), \]
where we write $\bsx_u=(x_j)_{j\in u}$ and each summand is defined recursively by $f_{\emptyset} = I_{\rho_s}(f)$ and 
\[ f_u(\bsx_u) = \int_{\Omega^{s-|u|}}f(\bsx)\rho_{s-|u|}(\bsx_{-u})\rd \bsx_{-u} -\sum_{v\subset u}f_v(\bsx_v)\]
for $\emptyset \neq u\subseteq [1:s]$. Regarding this decomposition of multivariate functions, the following properties hold. We refer to Lemmas~A.1 \& A.3 of \cite{Owenxx} for the proof of the case where $\rho$ is the uniform distribution over the unit interval $\Omega=[0,1]$.
\begin{lemma}\label{lem:anova}
With the notation above, we have:
\begin{enumerate}
\item For any non-empty $u\subseteq [1:s]$ and $j\in u$, 
\[ \int_{\Omega}f_u(\bsx_u) \rho(x_j) \rd x_j = 0. \]
\item For any $u,v\subseteq [1:s]$, 
\begin{align*} I_{\rho_s}(f_uf_v) & = \int_{\Omega^s}f_u(\bsx_u)f_v(\bsx_v)\rho_s(\bsx)\rd \bsx\\
&  = \begin{cases} I_{\rho_s}(f_u^2) & \text{if $u=v$,} \\ 0 & \text{otherwise.} \end{cases}
\end{align*}
\end{enumerate}
\end{lemma}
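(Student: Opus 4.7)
My plan is to prove part 1 (the vanishing marginal property) first by induction on $|u|$, and then deduce part 2 (orthogonality) from it by a short Fubini argument. Part 1 is where the real work lies: once each $f_u$ integrates to zero against $\rho(x_j)$ for any $j \in u$, orthogonality reduces to choosing an index in the symmetric difference $u \triangle v$ and integrating it out first, after which the whole integrand vanishes.

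For the base case $|u|=1$ with $u=\{j\}$, the recursive definition gives $f_{\{j\}}(x_j) = \int_{\Omega^{s-1}} f(\bsx) \rho_{s-1}(\bsx_{-\{j\}}) \rd \bsx_{-\{j\}} - f_{\emptyset}$, and integrating against $\rho(x_j)$ yields $I_{\rho_s}(f) - f_{\emptyset} = 0$ by definition of $f_{\emptyset}$. For the inductive step I fix $u$ with $|u| \ge 2$ and $j \in u$, assume the claim for all $v$ with $|v| < |u|$, and integrate the recursive formula for $f_u$ against $\rho(x_j) \rd x_j$. The first term, originally an integral of $f$ over the complement of $u$, extends to an integral of $f$ over the complement of $u \setminus \{j\}$; by the recursion defining $f_{u \setminus \{j\}}$ this equals $f_{u \setminus \{j\}}(\bsx_{u \setminus \{j\}}) + \sum_{w \subsetneq u \setminus \{j\}} f_w(\bsx_w)$. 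In the second term, I split the sum over $v \subsetneq u$ according to whether $j \in v$: the summands with $j \in v$ vanish by the induction hypothesis, while those with $j \notin v$ range exactly over all subsets of $u \setminus \{j\}$ and contribute $\sum_{w \subseteq u \setminus \{j\}} f_w(\bsx_w)$. The two lists telescope exactly and the total is $0$.

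Part 2 then follows immediately: when $u \neq v$, I pick $j$ in the symmetric difference, say $j \in u \setminus v$, and apply Fubini to carry out the $x_j$-integration first; since $f_v$ does not depend on $x_j$ and $\int_\Omega f_u(\bsx_u) \rho(x_j) \rd x_j = 0$ by part 1, the whole integral vanishes, and the case $u = v$ is tautological. The main obstacle I anticipate is the bookkeeping in the inductive step: I must confirm that the $f_{u \setminus \{j\}}$ produced by rewriting the first integral is precisely matched by the extra $w = u \setminus \{j\}$ term emerging from the second sum, so that every $f_w$ cancels and nothing stray survives. Beyond this the argument is routine; all interchanges of integrals are justified by $I_{\rho_s}(f^2) < \infty$ via Fubini, and the proof is structurally identical to the one in \cite{Owenxx}, with the uniform distribution on $[0,1]$ replaced throughout by $\rho$ on $\Omega$.
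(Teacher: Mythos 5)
Your proof is correct and is essentially the argument the paper relies on: the paper does not prove the lemma itself but cites Lemmas~A.1 and A.3 of \cite{Owenxx}, which proceed exactly as you do --- induction on $|u|$ for the vanishing-marginal property via the telescoping of the recursion, then orthogonality by integrating out an index in the symmetric difference --- with the uniform density replaced by $\rho$. The inductive bookkeeping you flag does check out: the $v\subsetneq u$ with $j\notin v$ are precisely the subsets of $u\setminus\{j\}$, matching the terms produced by rewriting the first integral.
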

It follows from the second assertion of Lemma~\ref{lem:anova} that
\begin{align*}
 I_{\rho_s}(f^2) & = I_{\rho_s}\left( \sum_{u,v\subseteq [1:s]}f_uf_v\right) \\
 & = \sum_{u,v\subseteq [1:s]}I_{\rho_s}(f_uf_v) = \sum_{u\subseteq [1:s]}I_{\rho_s}(f_u^2). 
\end{align*}
This equality means that the variance of $f$ can be expressed as a sum of the variances of the lower-dimensional functions:
\begin{equation}\label{eq:anova} I_{\rho_s}(f^2)-(I_{\rho_s}(f))^2 = \sum_{\emptyset \neq u\subseteq [1:s]}I_{\rho_s}(f_u^2). \end{equation}

Using these facts, the variance of the TMC estimator $I^{\TMC}_{\rho_s}(f; N)$ can be analyzed as follows:
\begin{theorem}\label{thm:variance}
Let $N,s \in \mathbb{Z}_{\ge 2}$. Then we have
\[ \EE[I^{\TMC}_{\rho_s}(f; N)] = I_{\rho_s}(f)\]
and
\begin{align*}
 & \VV[I^{\TMC}_{\rho_s}(f; N)] = \VV[I^{\MC}_{\rho_s}(f; N)] \\
 & \quad +\frac{2}{N^2}\sum_{\ell=1}^{\min(s,N)-1}(N-\ell)\sum_{\emptyset \neq u\subseteq [1:s-\ell]}I_{\rho_{|u|}}(f_uf_{u+\ell}), 
\end{align*}
where we write $u+\ell=\{j+\ell \colon j\in u\}$.
\end{theorem}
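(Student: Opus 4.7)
The plan is to expand the variance using stationarity and then apply the ANOVA decomposition componentwise to the cross-covariance terms.

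First, I would verify the expectation claim: each individual vector $\tbsx_n = (x_{n+s-1},\ldots,x_n)$ is composed of $s$ distinct i.i.d.\ samples from $\rho$, so its joint law is exactly $\rho_s$, giving $\EE[f(\tbsx_n)] = I_{\rho_s}(f)$ and thus the stated expectation of the TMC estimator. Next, I would expand
\[ \VV[I^{\TMC}_{\rho_s}(f; N)] = \frac{1}{N^2}\sum_{n=1}^{N}\VV[f(\tbsx_n)] + \frac{2}{N^2}\sum_{1\le n<m\le N}\mathrm{Cov}[f(\tbsx_n),f(\tbsx_m)]. \]
The diagonal part sums to $(I_{\rho_s}(f^2)-I_{\rho_s}(f)^2)/N = \VV[I^{\MC}_{\rho_s}(f;N)]$. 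The sequence $\{f(\tbsx_n)\}_n$ is stationary, so each covariance depends only on $\ell := m-n$, and the number of pairs with given $\ell$ is $N-\ell$; moreover, if $\ell \ge s$ then $\tbsx_n$ and $\tbsx_{n+\ell}$ share no $x_j$'s and are independent, killing the covariance. This reduces the sum to $\ell=1,\ldots,\min(s,N)-1$.

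The heart of the argument is computing $\mathrm{Cov}[f(\tbsx_n),f(\tbsx_{n+\ell})]$ for $1\le\ell\le s-1$ via the ANOVA decomposition. Writing $f = \sum_u f_u$ and using $\EE[f_u((\tbsx_n)_u)]=0$ for $u\neq\emptyset$ (Lemma~\ref{lem:anova}(1)), the covariance becomes
\[ \mathrm{Cov}[f(\tbsx_n),f(\tbsx_{n+\ell})] = \sum_{\substack{u,v\subseteq[1:s]\\ u,v\neq\emptyset}} \EE\bigl[f_u((\tbsx_n)_u)\,f_v((\tbsx_{n+\ell})_v)\bigr]. \]
Now I would identify which pairs $(u,v)$ contribute. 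Because coordinate $j$ of $\tbsx_n$ is $x_{n+s-j}$, coordinate $j$ of $\tbsx_n$ and coordinate $j'$ of $\tbsx_{n+\ell}$ use the same underlying sample if and only if $j'=j+\ell$. If there exists $j\in u$ with $j+\ell\notin v$, then $x_{n+s-j}$ appears only in the first factor, and integrating over it first annihilates the expectation by Lemma~\ref{lem:anova}(1); symmetrically for $j'\in v$ with $j'-\ell\notin u$. Hence only $v = u+\ell$ survives, which forces $u\subseteq[1:s-\ell]$.

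The final step is to evaluate each surviving term. When $v=u+\ell$, the arguments $(\tbsx_n)_u$ and $(\tbsx_{n+\ell})_{u+\ell}$ are driven by the \emph{same} $|u|$ i.i.d.\ copies of $\rho$, so the expectation reduces to the integral of $f_u f_{u+\ell}$ against the product density on $\Omega^{|u|}$, namely $I_{\rho_{|u|}}(f_u f_{u+\ell})$. Assembling all pieces yields the claimed formula. The main obstacle I expect is the index-shift bookkeeping that identifies the surviving $(u,v)$ pairs and makes the identification between $(\tbsx_n)_u$ and $(\tbsx_{n+\ell})_{u+\ell}$ precise; once this combinatorial dictionary is set up, the algebra of plugging it into the covariance expansion is routine.
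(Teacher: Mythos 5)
Your proposal is correct and follows essentially the same route as the paper: expand the variance over pairs, group the off-diagonal terms by the lag $\ell=m-n$, apply the ANOVA decomposition, use Lemma~\ref{lem:anova}(1) to kill every pair $(u,v)$ except $v=u+\ell$ with $u\subseteq[1:s-\ell]$, and identify the surviving expectation with $I_{\rho_{|u|}}(f_uf_{u+\ell})$ via the coordinate correspondence $\tilde{x}_{n,j}=\tilde{x}_{n+\ell,j+\ell}$. The only (cosmetic) difference is that you work directly with covariances, which sidesteps the explicit bookkeeping of the $f_\emptyset$ terms that the paper carries out when expanding the second moment.
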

\noindent Note that, in the theorem, we write
\[ I_{\rho_{|u|}}(f_uf_{u+\ell}) = \int_{\Omega^{|u|}}f_u(\bsx_u)f_{u+\ell}(\bsx_u)\rho_{|u|}(\bsx_u)\rd \bsx_u. \]
The readers should not be confused with
\[ I_{\rho_s}(f_uf_{u+\ell}) = \int_{\Omega^s}f_u(\bsx_u)f_{u+\ell}(\bsx_{u+\ell})\rho_s(\bsx)\rd \bsx=0. \]

\begin{proof}
The first assertion follows immediately from the linearity of expectation and the trivial equality $\EE[f(\tbsx_n)]=I_{\rho_s}(f)$. For the second assertion, by using the ANOVA decomposition of $f$, we have
\begin{align*}
& \VV[I^{\TMC}_{\rho_s}(f; N)] \\
& = \EE\left[(I^{\TMC}_{\rho_s}(f; N))^2\right] - \left(\EE[I^{\TMC}_{\rho_s}(f; N)]\right)^2 \\
& = \EE\left[(I^{\TMC}_{\rho_s}(f; N))^2\right] - \left(I_{\rho_s}(f)\right)^2 \\
& = \frac{1}{N^2}\sum_{m,n=1}^{N}\EE[ f(\tbsx_m)f(\tbsx_n)]-\left(I_{\rho_s}(f)\right)^2 \\
& = \frac{1}{N^2}\sum_{n=1}^{N}\EE[(f(\tbsx_n))^2]+\frac{2}{N^2}\sum_{\substack{m,n=1\\ m>n}}^{N}\EE[ f(\tbsx_m)f(\tbsx_n)] \\
& \quad -\left(I_{\rho_s}(f)\right)^2 \\
& = \frac{I_{\rho_s}(f^2)}{N}+\frac{2}{N^2}\sum_{\substack{m,n=1\\ m>n}}^{N}\sum_{u,v\subseteq [1:s]}\EE[ f_u(\tbsx_{m,u})f_v(\tbsx_{n,v})]\\
& \quad -\left(I_{\rho_s}(f)\right)^2.
\end{align*}
It follows from the first assertion of Lemma~\ref{lem:anova} that the second term on the right-most side above becomes
\begin{align*}
& \frac{2}{N^2}\sum_{\substack{m,n=1\\ m>n}}^{N}\sum_{u,v\subseteq [1:s]}\EE[ f_u(\tbsx_{m,u})f_v(\tbsx_{n,v})] \\
& = \frac{2}{N^2}\sum_{\substack{m,n=1\\ m>n}}^{N}f_{\emptyset}^2 +\frac{2}{N^2}\sum_{\substack{m,n=1\\ m>n}}^{N}\sum_{\emptyset \neq u\subseteq [1:s]}f_\emptyset\, \EE[ f_u(\tbsx_{m,u})] \\
& \quad + \frac{2}{N^2}\sum_{\substack{m,n=1\\ m>n}}^{N}\sum_{\emptyset \neq v\subseteq [1:s]}f_\emptyset\, \EE[ f_v(\tbsx_{n,v})]\\
& \quad +\frac{2}{N^2}\sum_{\substack{m,n=1\\ m>n}}^{N}\sum_{\emptyset \neq u,v\subseteq [1:s]}\EE[ f_u(\tbsx_{m,u})f_v(\tbsx_{n,v})] \\
& = \frac{N-1}{N}\left(I_{\rho_s}(f)\right)^2\\
& \quad +\frac{2}{N^2}\sum_{\substack{m,n=1\\ m>n}}^{N}\sum_{\emptyset \neq u,v\subseteq [1:s]}\EE[ f_u(\tbsx_{m,u})f_v(\tbsx_{n,v})] \\
& = \frac{N-1}{N}\left(I_{\rho_s}(f)\right)^2\\ 
& \quad +\frac{2}{N^2}\sum_{\ell=1}^{N-1}\sum_{\substack{m,n=1\\ m-n=\ell}}^{N}\sum_{\emptyset \neq u,v\subseteq [1:s]}\EE[ f_u(\tbsx_{m,u})f_v(\tbsx_{n,v})] ,
\end{align*}
where we reordered the sum over $m$ and $n$ with respect to the difference $m-n$ in the last equality.

If $m-n\geq s$, there is no overlapping of the components between $\tbsx_m$ and $\tbsx_n$. Because of the independence of samples, it follows from the first assertion of Lemma~\ref{lem:anova} that the inner sum over $u$ and $v$ above is given by
\begin{align*}
& \sum_{\emptyset \neq u,v\subseteq [1:s]}\EE[ f_u(\tbsx_{m,u})f_v(\tbsx_{n,v})] \\
& = \sum_{\emptyset \neq u,v\subseteq [1:s]}\EE[ f_u(\tbsx_{m,u})]\EE[f_v(\tbsx_{n,v})] =0.
\end{align*}
If $\ell=m-n<s$, on the other hand, we have $\tilde{x}_{n,j}=\tilde{x}_{m,j+\ell}$ for any $j=1,\ldots,s-\ell$. With this equality and the first assertion of Lemma~\ref{lem:anova}, the inner sum over $u$ and $v$ becomes
\begin{align*}
& \sum_{\emptyset \neq u,v\subseteq [1:s]}\EE[ f_u(\tbsx_{m,u})f_v(\tbsx_{n,v})] \\
& = \sum_{\emptyset \neq u\subseteq [1:s-\ell]}\EE[f_{u+\ell}(\tbsx_{m,u+\ell})f_{u}(\tbsx_{n,u})] \\
& = \sum_{\emptyset \neq u\subseteq [1:s-\ell]}\EE[f_{u+\ell}(\tbsx_{n,u})f_{u}(\tbsx_{n,u})] \\
& = \sum_{\emptyset \neq u\subseteq [1:s-\ell]}I_{\rho_{|u|}}(f_{u+\ell}f_u).
\end{align*}

Altogether we obtain
\begin{align*}
& \VV[I^{\TMC}_{\rho_s}(f; N)] \\
& = \frac{I_{\rho_s}(f^2)}{N}+\frac{N-1}{N}\left(I_{\rho_s}(f)\right)^2-\left(I_{\rho_s}(f)\right)^2\\
& \quad +\frac{2}{N^2}\sum_{\ell=1}^{N-1}\sum_{\substack{m,n=1\\ m-n=\ell}}^{N}\sum_{\emptyset \neq u,v\subseteq [1:s]}\EE[ f_u(\tbsx_{m,u})f_v(\tbsx_{n,v})] \\
& = \frac{I_{\rho_s}(f^2)-\left(I_{\rho_s}(f)\right)^2}{N}\\
& \quad +\frac{2}{N^2}\sum_{\ell=1}^{\min(s,N)-1}\sum_{\emptyset \neq u\subseteq [1:s-\ell]}I_{\rho_{|u|}}(f_uf_{u+\ell})\sum_{\substack{m,n=1\\ m-n=\ell}}^{N}1\\
& = \VV[I^{\MC}_{\rho_s}(f; N)] \\
& \quad +\frac{2}{N^2}\sum_{\ell=1}^{\min(s,N)-1}(N-\ell)\sum_{\emptyset \neq u\subseteq [1:s-\ell]}I_{\rho_{|u|}}(f_uf_{u+\ell}).
\end{align*}
Thus we are done.\smartqed
\end{proof}

\begin{remark}
We now consider a parallel implementation of the TMC method with $L$ CPUs. Each CPU independently generates a Toeplitz matrix of $N$ random numbers and CPU $v$ computes an estimator $I^{\TMC}_{\rho_s}(f; N; v)$ of the form \eqref{TMC:est},  $v = 1, 2, \ldots, L$. In this case, Theorem~\ref{thm:variance} then applies to the output of each CPU in the parallel implementation. The average of these $L$ independent estimators is again unbiased and satisfies 
\begin{align*}
 & \VV\left[ \frac{1}{L} \sum_{v=1}^L I^{\TMC}_{\rho_s}(f; N; v)\right] = \frac{1}{L} \VV[I^{\MC}_{\rho_s}(f; N)] \\
 & \quad +\frac{2}{L N^2}\sum_{\ell=1}^{\min(s,N)-1}(N-\ell)\sum_{\emptyset \neq u\subseteq [1:s-\ell]}I_{\rho_{|u|}}(f_uf_{u+\ell}).
\end{align*}
Notice that we have $LN$ points altogether.

If $L=1$ then we get the result from Theorem~\ref{thm:variance} and if $N =1$ we get \eqref{MC:variance} (where, in this case, $L$ is the number of samples), as the sum $\sum_{\ell=1}^0 \ldots$ is equal to 0 in this instance.
\end{remark}

As is clear from Theorem~\ref{thm:variance}, the TMC estimator is unbiased and maintains the canonical ``one over square root of $N$'' convergence. Moreover, the TMC estimator can be regarded as a variance reduction technique since the second term on the variance $\VV[I^{\TMC}_{\rho_s}(f; N)]$ can be negative, depending on the function. 

\begin{example}
To illustrate the last comment, let us consider a simple test function $f\colon \RR^3\to \RR$ given by
\[ f(x,y,z) =x- y- z+xy-xz-yz, \]
and let $x,y,z$ be normally distributed independent random variables with mean 0 and variance 1. It is easy to see that
\begin{align*}
& f_{\{1\}}(x) = x, \quad f_{\{2\}}(y)=-y,\quad f_{\{3\}}(z)=-z, \\
& f_{\{1,2\}}(x,y) = xy, \quad f_{\{1,3\}}(x,z)=-xz, \\
& f_{\{2,3\}}(y,z)=-yz, \quad f_{\{1,2,3\}}(x,y,z) = 0.
\end{align*}
Then it follows that
\[ \VV[I^{\MC}_{\rho_s}(f; N)] =\frac{1}{N}\sum_{\emptyset \neq u\subseteq [1:3]}I_{\rho_s}(f_u^2) =\frac{6}{N}, \]
whereas, for $N\geq 3$, we have
\begin{align*}
 & \VV[I^{\TMC}_{\rho_s}(f; N)] \\
 & = \frac{6}{N} + \frac{2(N-2)}{N^2}I_{\rho_1}(f_{\{1\}}f_{\{3\}}) +\frac{2(N-1)}{N^2}\\
 & \quad \times \left[I_{\rho_1}(f_{\{1\}}f_{\{2\}})+I_{\rho_1}(f_{\{2\}}f_{\{3\}})+I_{\rho_2}(f_{\{1,2\}}f_{\{2,3\}})\right] \\
 & = \frac{6}{N}-\frac{2(N-2)}{N^2}-\frac{2(N-1)}{N^2} = \frac{2}{N}+\frac{6}{N^2}.
\end{align*}
Therefore the variance of the TMC estimator is almost one-third of the variance of the standard Monte Carlo estimator.

It is also possible that the variance of the TMC estimator increases compared to standard Monte Carlo, however, we show below that this increase is bounded.
\end{example}

\subsection{Weighted $L_2$ space and tractability}
Here we study the dependence of the variance $\VV[I^{\TMC}_{\rho_s}(f; N)]$ on the dimension $s$. 
For this purpose, we first give a bound on $\VV[I^{\TMC}_{\rho_s}(f; N)]$. For $1\leq \ell\leq s$, let 
\[ \alpha_\ell(f) := \left(\sum_{\substack{\emptyset \neq u\subseteq [1:s]\\ \min_{j\in u}j=\ell}}I_{\rho_{|u|}}(f_u^2) \right)^{1/2}.\]
Then it follows from the decomposition \eqref{eq:anova} that
\[ I_{\rho_s}(f^2)-(I_{\rho_s}(f))^2 = \sum_{\ell=1}^{s}\sum_{\substack{\emptyset \neq u\subseteq [1:s]\\ \min_{j\in u}j=\ell}}I_{\rho_s}(f_u^2) = \sum_{\ell=1}^{s}(\alpha_\ell(f))^2,\]
resulting in an equality
\[ \VV[I^{\MC}_{\rho_s}(f; N)]  = \frac{1}{N}\sum_{\ell=1}^{s}(\alpha_\ell(f))^2. \]
Using Theorem~\ref{thm:variance}, the variance $\VV[I^{\TMC}_{\rho_s}(f; N)]$ is bounded above as follows.
\begin{corollary}\label{cor:bound}
We have
\[ \VV[I^{\TMC}_{\rho_s}(f; N)] \leq \frac{1}{N}\left(\sum_{\ell=1}^{s}\alpha_\ell(f)\right)^2.\]
\end{corollary}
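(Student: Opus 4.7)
The plan is to start from the exact formula for $\VV[I^{\TMC}_{\rho_s}(f;N)]$ provided by Theorem~\ref{thm:variance} and the identity $\VV[I^{\MC}_{\rho_s}(f;N)] = \frac{1}{N}\sum_{\ell=1}^{s}(\alpha_\ell(f))^2$ already established. The target bound can be expanded as
\[ \frac{1}{N}\left(\sum_{\ell=1}^{s}\alpha_\ell(f)\right)^2 = \frac{1}{N}\sum_{\ell=1}^{s}(\alpha_\ell(f))^2 + \frac{2}{N}\sum_{\ell=1}^{s-1}\sum_{m=1}^{s-\ell}\alpha_m(f)\alpha_{m+\ell}(f), \]
so it suffices to control the off-diagonal cross terms in the TMC variance by $\frac{2}{N}\sum_{m}\alpha_m(f)\alpha_{m+\ell}(f)$ for each shift $\ell$.

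First I would pass to absolute values: since $N-\ell \geq 0$, the cross-term contribution is at most $\frac{2}{N^2}\sum_{\ell}(N-\ell)\sum_{\emptyset\neq u\subseteq[1:s-\ell]}|I_{\rho_{|u|}}(f_uf_{u+\ell})|$. Next, for each fixed $u$, apply the Cauchy--Schwarz inequality with respect to the measure $\rho_{|u|}$ to obtain
\[ |I_{\rho_{|u|}}(f_uf_{u+\ell})| \leq \bigl(I_{\rho_{|u|}}(f_u^2)\bigr)^{1/2}\bigl(I_{\rho_{|u|}}(f_{u+\ell}^2)\bigr)^{1/2}. \]
This is legitimate because both $f_u(\bsx_u)$ and $f_{u+\ell}(\bsx_u)$ are genuine functions of the same $|u|$ variables under the integral appearing in the TMC formula.

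The crucial combinatorial step is the grouping by the minimum index. For fixed $\ell$, I would partition the sum over $u\subseteq[1:s-\ell]$ according to $m := \min_{j\in u}j$, with $m$ ranging in $[1:s-\ell]$. Applying the discrete Cauchy--Schwarz inequality within each block,
\[ \sum_{\substack{u\subseteq[1:s-\ell]\\ \min u = m}} \bigl(I_{\rho_{|u|}}(f_u^2)\bigr)^{1/2}\bigl(I_{\rho_{|u|}}(f_{u+\ell}^2)\bigr)^{1/2} \leq \alpha_m(f)\,\alpha_{m+\ell}(f), \]
where I use that $\min(u+\ell)=m+\ell$, that the map $u\mapsto u+\ell$ is injective, and that restricting to $u\subseteq[1:s-\ell]$ (or $u+\ell \subseteq [\ell+1:s]$) gives only a subset of the index set defining $\alpha_m(f)^2$ and $\alpha_{m+\ell}(f)^2$, respectively.

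Finally, substituting back and using the trivial bound $\frac{2(N-\ell)}{N^2}\leq \frac{2}{N}$, together with extending the outer sum from $\ell\in[1:\min(s,N)-1]$ to $\ell\in[1:s-1]$ (which only adds nonnegative terms), I recover exactly the off-diagonal part of $\frac{1}{N}\bigl(\sum_\ell \alpha_\ell(f)\bigr)^2$. Combining with the diagonal $\frac{1}{N}\sum_\ell (\alpha_\ell(f))^2$ completes the proof. The main subtlety is the grouping-by-minimum step: one must verify carefully that the bijection $u\leftrightarrow u+\ell$ respects the definition of $\alpha_{m+\ell}(f)$, and the estimate is not tight otherwise; the rest is a chain of Cauchy--Schwarz applications and nonnegative bounds.
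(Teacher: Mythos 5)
Your proposal is correct and follows essentially the same route as the paper: pointwise Cauchy--Schwarz on $I_{\rho_{|u|}}(f_uf_{u+\ell})$, grouping the subsets $u$ by their minimum index, a discrete Cauchy--Schwarz within each block using the injectivity of $u\mapsto u+\ell$ and the inclusion of the restricted index sets into those defining $\alpha_m(f)$ and $\alpha_{m+\ell}(f)$, and finally the bounds $(N-\ell)/N^2\leq 1/N$ together with enlarging the range of $\ell$. Your ordering of the last two steps (replacing $(N-\ell)/N^2$ by $1/N$ \emph{before} extending the sum over $\ell$ to $s-1$, so that only nonnegative terms are added) is in fact slightly tidier than the paper's, but the argument is the same.
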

\begin{proof}
For any $\ell=1,\ldots,s-1$, the Cauchy-Schwarz inequality leads to
\begin{align*}
& \sum_{\emptyset \neq u\subseteq [1:s-\ell]}I_{\rho_{|u|}}(f_uf_{u+\ell}) \\
& \leq \sum_{\emptyset \neq u\subseteq [1:s-\ell]}\left(I_{\rho_{|u|}}(f_u^2)\right)^{1/2}\left(I_{\rho_{|u|}}(f_{u+\ell}^2)\right)^{1/2} \\
& = \sum_{v=1}^{s-\ell}\sum_{\substack{\emptyset \neq u\subseteq [1:s-\ell]\\ \min_{j\in u}j=v}}\left(I_{\rho_{|u|}}(f_u^2)\right)^{1/2}\left(I_{\rho_{|u|}}(f_{u+\ell}^2)\right)^{1/2}  \\
& \leq \sum_{v=1}^{s-\ell}\left( \sum_{\substack{\emptyset \neq u\subseteq [1:s-\ell]\\ \min_{j\in u}j=v}}I_{\rho_{|u|}}(f_u^2) \right)^{1/2} \\ 
& \qquad \times \left( \sum_{\substack{\emptyset \neq u\subseteq [1:s-\ell]\\ \min_{j\in u}j=v}}I_{\rho_{|u|}}(f_{u+\ell}^2)\right)^{1/2} \\
& = \sum_{v=1}^{s-\ell}\left( \sum_{\substack{\emptyset \neq u\subseteq [1:s-\ell]\\ \min_{j\in u}j=v}}I_{\rho_{|u|}}(f_u^2) \right)^{1/2} \\
& \qquad \times \left( \sum_{\substack{\emptyset \neq u\subseteq [\ell+1:s]\\ \min_{j\in u}j=\ell+v}}I_{\rho_{|u|}}(f_{u}^2)\right)^{1/2} \\
& \leq \sum_{v=1}^{s-\ell}\alpha_{v}(f)\alpha_{\ell+v}(f).
\end{align*}
Applying this bound to the second assertion of Theorem~\ref{thm:variance}, we obtain
\begin{align*}
& \VV[I^{\TMC}_{\rho_s}(f; N)] \\
& \leq \frac{1}{N}\sum_{\ell=1}^{s}(\alpha_\ell(f))^2 +\frac{2}{N^2}\sum_{\ell=1}^{s-1}(N-\ell)\sum_{v=1}^{s-\ell}\alpha_{v}(f)\alpha_{\ell+v}(f) \\
& \leq \frac{1}{N}\sum_{\ell=1}^{s}(\alpha_\ell(f))^2 +\frac{2}{N}\sum_{v=1}^{s-1}\alpha_{v}(f)\sum_{\ell=v+1}^{s}\alpha_{\ell}(f)\\
&  = \frac{1}{N}\left(\sum_{\ell=1}^{s}\alpha_\ell(f)\right)^2.
\end{align*}\smartqed
\end{proof}

Using this result, we have
\[ \frac{\VV[I^{\TMC}_{\rho_s}(f; N)]}{\VV[I^{\MC}_{\rho_s}(f; N)]}\leq \frac{(\alpha_1(f)+\cdots+\alpha_s(f))^2}{\alpha^2_1(f)+\cdots+\alpha^2_s(f)}\leq s, \]
wherein, for the second inequality, the equality is attained if and only if $\alpha_1(f)=\cdots=\alpha_s(f)$.
Therefore, when we fix the number of samples, the variance of the TMC estimator can at most be $s$ times larger than the variance of the standard Monte Carlo estimator.

Now let us consider the case $s=t$ and assume, as discussed in the first section, that the computational time for the standard Monte Carlo estimator is proportional to $Ns^2$, whereas the computational time for the TMC estimator is proportional to $Ns\log s$ (assuming that the main cost in evaluating $f(\bsx) = g(\bsx A )$ lies in the computation of $\bsx A$). When we fix the cost instead of the number of samples, we have
\[ N_{\MC} s^2 \asymp N_{\TMC}s\log s, \]
where $\asymp$ indicates that the terms should be of the same order, and so
\begin{align*} 
\frac{\VV[I^{\TMC}_{\rho_s}(f; N_{\TMC})]}{\VV[I^{\MC}_{\rho_s}(f; N_{\MC})]} & \leq \frac{N_{\MC}}{N_{\TMC}}\cdot \frac{(\alpha_1(f)+\cdots+\alpha_s(f))^2}{\alpha^2_1(f)+\cdots+\alpha^2_s(f)} \\
& \propto \log s. 
\end{align*} 
Thus, the variance of the TMC estimator for a given cost is at most $\log s$ times as large as the standard Monte Carlo estimator (up to some constant factor). On the other hand, if there is some decay of the importance of the ANOVA terms as the index of the variable increases, for instance, if the first few terms  in $\alpha_1(f) + \cdots + \alpha_s(f)$ dominate the sum, then the ratio $\frac{(\alpha_1(f)+\cdots+\alpha_s(f))^2}{\alpha^2_1(f)+\cdots+\alpha^2_s(f)}$ can be bounded independently of $s$, leading to a gain in the efficiency of the TMC estimator. We observe such a behaviour in our numerical experiments below.

Following the idea from \cite{SW98}, we now introduce the notion of a \emph{weighted} $L_2$ space. Let $(\gamma_u)_{u\subset \NN}$ be a sequence of the non-negative real numbers called weights. Then the weighted $L_2$ space is defined by
\[ \Fcal_{s,\bsgamma} = \left\{f\colon \Omega^s\to \RR \mid \|f\|_{s,\bsgamma}\leq \infty \right\}, \]
where
\[ \|f\|_{s,\bsgamma} := \left(\sum_{u\subseteq [1:s]}\gamma_u^{-1}I_{\rho_s}(f_u^2)\right)^{1/2}. \]
For any subset $u$ with $\gamma_u=0$, we assume that the corresponding ANOVA term $f_u$ is 0 and we formally set $0/0=0$. 

For a randomized algorithm using $N$ function evaluations of $f$ to estimate $I_{\rho_s}(f)$, which we denote by $\Alg(f;N)$, 
let us consider the minimal cost to estimate $I_{\rho_s}(f)$ with mean square error $\varepsilon^2$ for any $f$ in the unit ball of $\Fcal_{s,\bsgamma}$:
\[ N(\varepsilon, \Alg) := \min\left\{ N\in \ZZ_{>0} \mid e^2(\Alg, \Fcal_{s,\bsgamma} )\leq \varepsilon^2 \right\}, \]
where
\[ e^2(\Alg, \Fcal_{s,\bsgamma} ):=\sup_{\substack{f\in \Fcal_{s,\bsgamma} \\ \|f\|_{s,\bsgamma}\leq 1}}\EE\left[ \left( \Alg(f;N) -I_{\rho_s}(f)\right)^2 \right]. \]
We say that the algorithm $\Alg$ is
\begin{itemize}
\item a \emph{weakly tractable} algorithm if
\[ \lim_{\varepsilon^{-1}+s\to \infty}\frac{\ln N(\varepsilon, \Alg)}{\varepsilon^{-1}+s}=0, \]
\item a \emph{polynomially tractable} algorithm if there exist non-negative constants $C,p,q$, such that
\[ N(\varepsilon, \Alg)\leq C \varepsilon^{-p}s^q \]
holds for all $s=1,2,\ldots$, where $p$ and $q$ are called the $\varepsilon^{-1}$-exponent and the $s$-exponent, respectively,
\item a \emph{strongly polynomially tractable} algorithm if $\Alg$ is a polynomially tractable algorithm with the $s$-exponent 0.
\end{itemize}
We refer to \cite{NWbook} for more information on the notion of tractability.

For instance, the standard Monte Carlo estimator $I^{\MC}_{\rho_s}(f; N)$ is a strongly polynomially tractable algorithm with $\varepsilon^{-1}$-exponent 2 if
\begin{equation}\label{eq:mc_cond} \sup_{u\subset \NN}\gamma_u<\infty \end{equation}
holds. This claim can be proven as follows:
\begin{align*}
& \EE\left[ \left( I^{\MC}_{\rho_s}(f; N) -I_{\rho_s}(f)\right)^2 \right] \\
& = \VV[I^{\MC}_{\rho_s}(f; N)] = \frac{1}{N}\sum_{\emptyset \neq u\subseteq [1:s]}I_{\rho_s}(f_u^2) \\
& \leq \frac{1}{N}\left( \max_{u\subseteq [1:s]}\gamma_u \right)\left(\sum_{u\subseteq [1:s]}\frac{I_{\rho_s}(f_u^2)}{\gamma_u}\right) \\
& = \frac{\|f\|_{s,\bsgamma}^2}{N}\max_{u\subseteq [1:s]}\gamma_u .
\end{align*}
It follows that, in order to have 
\[ \EE\left[ \left( I^{\MC}_{\rho_s}(f; N) -I_{\rho_s}(f)\right)^2 \right]\leq \varepsilon^2 \]
for any $f\in \Fcal_{s,\bsgamma}$ with $\|f\|_{s,\bsgamma}\leq 1$, we need $N\geq  \varepsilon^{-2} \max_{u\subseteq [1:s]}\gamma_u$. Thus the minimal cost is bounded above by
\[ N(\varepsilon, I^{\MC}_{\rho_s}) \leq \varepsilon^{-2} \max_{u\subseteq [1:s]}\gamma_u . \]
Given the condition \eqref{eq:mc_cond}, we see that $N(\varepsilon, I^{\MC}_{\rho_s})$ is bounded independently of the dimension $s$ and the algorithm $I^{\MC}_{\rho_s}$ is strongly polynomially tractable with the $\varepsilon^{-1}$-exponent 2.

The following theorem gives the necessary conditions on the weights $(\gamma_u)_{u\subset \NN}$ for the TMC estimator to be a weakly tractable algorithm, a polynomially tractable algorithm, or a strongly polynomially tractable algorithm.
\begin{theorem}\label{thm:tractability}The TMC estimator is 
\begin{itemize}
\item a weakly tractable algorithm if
\[ \lim_{s\to \infty}\frac{1}{s}\ln \left(\sum_{\ell=1}^{s}\max_{\substack{u\subset [1:s]\\ \min_{j\in u}j=\ell}}\gamma_u\right) =0, \]
\item a polynomially tractable algorithm with the $\varepsilon^{-1}$-exponent 2 if there exists $q>0$ such that
\[ \sup_{s=1,2,\ldots}\frac{1}{s^q}\sum_{\ell=1}^{s}\max_{\substack{u\subset [1:s]\\ \min_{j\in u}j=\ell}}\gamma_u < \infty, \]
\item a strongly polynomially tractable algorithm with the $\varepsilon^{-1}$-exponent 2 if
\[ \sum_{\ell=1}^{\infty}\sup_{\substack{u\subset \NN \\ \min_{j\in u}j=\ell}}\gamma_u < \infty. \]
\end{itemize}
\end{theorem}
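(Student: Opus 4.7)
The plan is to start from Corollary~\ref{cor:bound}, which already gives the estimate
\[ \VV[I^{\TMC}_{\rho_s}(f;N)] \leq \frac{1}{N}\left(\sum_{\ell=1}^{s}\alpha_\ell(f)\right)^2, \]
and turn this into a bound in terms of the weighted norm $\|f\|_{s,\bsgamma}$. The key step is to insert and extract the weights carefully so that the ``size'' of the weights gets separated from the function-dependent part. Grouping the $\ell$-indexed ANOVA variances using the weights gives, by a single Cauchy--Schwarz inequality inside the definition of $\alpha_\ell(f)$,
\[ \alpha_\ell(f) \leq \left(\max_{\substack{u\subseteq [1:s]\\ \min_{j\in u}j=\ell}}\gamma_u\right)^{1/2} \left(\sum_{\substack{\emptyset\neq u\subseteq [1:s]\\ \min_{j\in u}j=\ell}}\frac{I_{\rho_{|u|}}(f_u^2)}{\gamma_u}\right)^{1/2}. \]
A second application of Cauchy--Schwarz in $\ell$, together with the fact that summing the second factor over $\ell$ recovers $\|f\|_{s,\bsgamma}^2$, yields
\[ \sum_{\ell=1}^{s}\alpha_\ell(f) \leq \left(\sum_{\ell=1}^{s}\max_{\substack{u\subseteq [1:s]\\ \min_{j\in u}j=\ell}}\gamma_u\right)^{1/2} \|f\|_{s,\bsgamma}. \]

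Combining this with Corollary~\ref{cor:bound}, I obtain for every $f\in\Fcal_{s,\bsgamma}$ with $\|f\|_{s,\bsgamma}\leq 1$ the mean-square error bound
\[ \EE\left[(I^{\TMC}_{\rho_s}(f;N)-I_{\rho_s}(f))^2\right] \leq \frac{1}{N}\sum_{\ell=1}^{s}\max_{\substack{u\subseteq [1:s]\\ \min_{j\in u}j=\ell}}\gamma_u, \]
so that
\[ N(\varepsilon,I^{\TMC}_{\rho_s}) \leq \left\lceil \varepsilon^{-2}\sum_{\ell=1}^{s}\max_{\substack{u\subseteq [1:s]\\ \min_{j\in u}j=\ell}}\gamma_u \right\rceil. \]
This is the only estimate I actually need; each of the three tractability statements is then a direct consequence of bounding the right-hand side under the stated weight condition.

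For strong polynomial tractability, the assumption $\sum_{\ell=1}^{\infty}\sup_{u:\,\min j=\ell}\gamma_u<\infty$ dominates the finite sum uniformly in $s$, so $N(\varepsilon,I^{\TMC}_{\rho_s})\leq C\varepsilon^{-2}$ with $C$ independent of $s$. For polynomial tractability, the assumption gives $\sum_{\ell=1}^{s}\max_u\gamma_u\leq C s^q$, hence $N(\varepsilon,I^{\TMC}_{\rho_s})\leq C\varepsilon^{-2}s^q$ and the $\varepsilon^{-1}$-exponent is $2$. For weak tractability, taking logarithms in the bound gives
\[ \frac{\ln N(\varepsilon,I^{\TMC}_{\rho_s})}{\varepsilon^{-1}+s} \leq \frac{2\ln\varepsilon^{-1}+\ln\left(\sum_{\ell=1}^{s}\max_u \gamma_u\right)+O(1)}{\varepsilon^{-1}+s}, \]
and both summands tend to $0$ as $\varepsilon^{-1}+s\to\infty$: the first because $\ln t/t\to 0$ as $t\to\infty$, and the second because $\ln(\sum_\ell\max_u\gamma_u)/s\to 0$ by hypothesis.

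The main obstacle I anticipate is the first Cauchy--Schwarz step: one has to pick the correct grouping of the ANOVA terms (by the minimum index of $u$) so that the ``weights of the maximum'' factor out of $\alpha_\ell(f)$ while the weighted norm reassembles correctly on the other side; once this inequality is in hand, the tractability conditions essentially read themselves off. No delicate estimate on the ``$N-\ell$'' factors or on $\min(s,N)$ is required here because Corollary~\ref{cor:bound} has already absorbed them.
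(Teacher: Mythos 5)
Your proposal is correct and takes essentially the same route as the paper: the same weight-insertion (H\"older/Cauchy--Schwarz) step that bounds $\sum_{\ell=1}^{s}\alpha_\ell(f)$ by $\bigl(\sum_{\ell=1}^{s}\max_{u}\gamma_u\bigr)^{1/2}\|f\|_{s,\bsgamma}$, applied to Corollary~\ref{cor:bound}, yielding $N(\varepsilon, I^{\TMC}_{\rho_s}) \leq \varepsilon^{-2}\sum_{\ell=1}^{s}\max_u \gamma_u$, from which the three tractability statements are read off exactly as in the paper (which writes out only the weak-tractability case and notes the other two are analogous).
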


\begin{proof} It follows from Corollary~\ref{cor:bound} and H\"{o}lder's inequality for sums that
\begin{align*}
& \EE\left[ \left( I^{\TMC}_{\rho_s}(f; N) -I_{\rho_s}(f)\right)^2 \right] \\
& = \VV[I^{\TMC}_{\rho_s}(f; N)] \leq \frac{1}{N}\left(\sum_{\ell=1}^{s}\alpha_\ell(f)\right)^2 \\
& = \frac{1}{N}\left(\sum_{\ell=1}^{s}\left(\sum_{\substack{\emptyset \neq u\subseteq [1:s]\\ \min_{j\in u}j=\ell}}I_{\rho_{|u|}}(f_u^2) \right)^{1/2}\right)^2 \\
& \leq \frac{1}{N}\left(\sum_{\ell=1}^{s}\left(\max_{\substack{\emptyset \neq u\subseteq [1:s]\\ \min_{j\in u}j=\ell}}\gamma_u\right)^{1/2}\right.\\
& \quad \times \left. \left(\sum_{\substack{\emptyset \neq u\subseteq [1:s]\\ \min_{j\in u}j=\ell}}\gamma_u^{-1}I_{\rho_{|u|}}(f_u^2) \right)^{1/2}\right)^2\\
& \leq \frac{1}{N}\left( \sum_{\ell=1}^{s}\max_{\substack{\emptyset \neq u\subseteq [1:s]\\ \min_{j\in u}j=\ell}}\gamma_u\right)\left( \sum_{\ell=1}^{s}\sum_{\substack{\emptyset \neq u\subseteq [1:s]\\ \min_{j\in u}j=\ell}}\gamma_u^{-1}I_{\rho_{|u|}}(f_u^2) \right) \\
& \leq \frac{\|f\|_{s,\bsgamma}^2}{N}\sum_{\ell=1}^{s}\max_{\substack{\emptyset \neq u\subseteq [1:s]\\ \min_{j\in u}j=\ell}}\gamma_u .
\end{align*}
Thus, the minimal cost to have 
\[ \EE\left[ \left( I^{\TMC}_{\rho_s}(f; N) -I_{\rho_s}(f)\right)^2 \right]\leq \varepsilon^2\]
for any $f\in \Fcal_{s,\bsgamma}$ with $\|f\|_{s,\bsgamma}\leq 1$ is bounded above by
\[ N(\varepsilon, I^{\TMC}_{\rho_s}) \leq \varepsilon^{-2} \sum_{\ell=1}^{s}\max_{\substack{\emptyset \neq u\subseteq [1:s]\\ \min_{j\in u}j=\ell}}\gamma_u . \]

Let us consider the first assertion of the theorem. If the weights satisfy
\[ \lim_{s\to \infty}\frac{1}{s}\ln \left(\sum_{\ell=1}^{s}\max_{\substack{u\subset [1:s]\\ \min_{j\in u}j=\ell}}\gamma_u\right) =0, \]
then we have
\begin{align*}
& \lim_{\varepsilon^{-1}+s\to \infty}\frac{\ln N(\varepsilon, I^{\TMC}_{\rho_s})}{\varepsilon^{-1}+s} \\
& \leq \lim_{\varepsilon^{-1}+s\to \infty}\left(\frac{\ln \varepsilon^{-2}}{\varepsilon^{-1}} +\frac{1}{s}\ln \left(\sum_{\ell=1}^{s}\max_{\substack{\emptyset \neq u\subseteq [1:s]\\ \min_{j\in u}j=\ell}}\gamma_u\right) \right) =0,
\end{align*}
meaning that $I^{\TMC}_{\rho_s}$ is a weakly tractable algorithm. Since the second and third assertions can be shown similarly, we omit the proof.\smartqed
\end{proof}

For instance, if the weights satisfy $\gamma_u\geq \gamma_v$ whenever $u\subset v$, we always have
\[ \sup_{\substack{u\subset \NN \\ \min_{j\in u}j=\ell}}\gamma_u = \gamma_{\{\ell\}}\]
for any $\ell \in \ZZ_{>0}$. Therefore, the necessary condition for the TMC estimator to be strongly polynomially tractable reduces to a simple summability:
\[ \sum_{\ell=1}^{\infty}\gamma_{\{\ell\}} < \infty. \]
It is obvious to see that the necessary condition for the TMC estimator to be weakly tractable is stronger than that for the standard Monte Carlo estimator to be strongly tractable.
Whether we can weaken the necessary conditions for the TMC estimator given in Theorem~\ref{thm:tractability} or not is an open question.

\section{Numerical experiments}
In order to see the practical performance of the TMC estimator, we conduct four kinds of numerical experiments.\footnote{The C codes used in our experiments are available from \url{https://github.com/takashigoda/Toeplitz-Monte-Carlo}.} 
The first test case follows Section~4.1 of \cite{DKLS15}, which considers generating quadrature points 
from a multivariate normal distribution with a general covariance matrix.
The second test case, taken from Section~4.2 of \cite{DKLS15}, 
deals with approximating linear functionals of solutions of one-dimensional PDE with ``uniform'' random coefficients. 
The third test case is an extension of the second test case to a one-dimensional PDE with ``log-normal'' random coefficients.
Finally, in the fourth test case we consider another possible extension of the second test case, namely a two-dimensional PDE with uniform random coefficients.
All computations are performed on a laptop with 1.6 GHz Intel Core i5 CPU and 8 GB memory.

For every test case, we carry out numerical experiments with various values of $N$ and $s$ 
using both the standard Monte Carlo estimator and the TMC estimator.
For each pair of $N$ and $s$, we repeat computations $R=25$ times independently  
and calculate the average computational time. For the latter three test cases, the variances of these estimators are measured by
\[ \frac{1}{R(R-1)}\sum_{r=1}^{R}\left( I^{\bullet, (r)}_{\rho_s}(f; N)-\overline{I^{\bullet}_{\rho_s}(f; N)}\right)^2 \]
with
\[ \overline{I^{\bullet}_{\rho_s}(f; N)}=\frac{1}{R}\sum_{r=1}^{R}I^{\bullet, (r)}_{\rho_s}(f; N), \]
for $\bullet\in \{\MC,\TMC\}$, where $I^{\bullet, (r)}_{\rho_s}(f; N)$ denotes the $r$-th realization of the estimator $I^{\bullet}_{\rho_s}(f; N)$.

\subsection{Generating points from multivariate Gaussian}
\begin{table*}
\caption{Average times (in seconds) to generate normally distributed points with a random covariance matrix 
for various values of $N$ and $s$ using the standard Monte Carlo method and the TMC method.}\label{tbl:mvn}
\begin{center}
\begin{tabular}{|c|c|c|c|c|c|c|}
 \hline
 & $N$ & $s=128$ & $s=256$ & $s=512$ & $s=1024$ & $s=2048$ \\ \hline \hline
 stdMC & 1024  & 0.041 & 0.192 & 1.369 & 9.613 & -- \\ 
 TMC &         & 0.165 & 0.239 & 0.403 & 0.872 & -- \\ \hline
 saving &       & 0.250 & 0.800 & 3.399 & 11.021 & -- \\ \hline \hline
 stdMC & 2048  & 0.083 & 0.367 & 2.077 & 20.154 & 217.162 \\ 
 TMC &         & 0.322 & 0.465 & 0.923 & 1.794 & 3.599 \\ \hline
 saving &       & 0.259 & 0.790 & 2.251 & 11.234 & 60.342 \\ \hline \hline
 stdMC & 4096  & 0.154 & 0.740 & 5.551 & 45.287 & 446.974 \\ 
 TMC &         & 0.586 & 0.981 & 1.909 & 3.407 & 7.112 \\ \hline
 saving &       & 0.263 & 0.754 & 2.908 & 13.291 & 62.851 \\ \hline \hline
 stdMC & 8192  & 0.288 & 1.669 & 10.332 & 87.988 & 830.648 \\ 
 TMC &         & 1.169 & 1.868 & 3.110 & 6.204 & 14.561 \\ \hline
 saving &       & 0.247 & 0.893 & 3.322 & 14.183 & 57.047 \\ \hline \hline
 stdMC & 16384 & 0.660 & 3.255 & 19.764 & 159.792 & 1820.426 \\ 
 TMC &         & 2.434 & 3.750 & 7.088 & 14.073 & 28.082 \\ \hline
 saving &       & 0.271 & 0.868 & 2.788 & 11.355 & 64.826 \\ \hline \hline
 stdMC & 32768 & 1.078 & 5.627 & 33.011 & 331.747 & 3648.755 \\ 
 TMC &         & 5.486 & 9.098 & 15.164 & 25.950 & 57.171 \\ \hline
 saving &       & 0.197 & 0.618 & 2.177 & 12.784 & 63.822 \\ \hline 
\end{tabular}
\end{center}
\end{table*}

Generating quadrature points from the multivariate normal distribution $\Ncal(\bsmu,\Sigma)$ 
with mean vector $\bsmu\in \RR^s$ and covariance matrix $\Sigma\in \RR^{s\times s}$ is ubiquitous in scientific computation.
The standard procedure is as follows \citep[Chapter~XI.2]{Dev86}: Let $A\in \RR^{s\times s}$ be a matrix which satisfies $A^{\top} A=\Sigma$.
For instance, the Cholesky decomposition gives such $A$ in an upper triangular form for any symmetric positive-definite matrix $\Sigma$.
Using this decomposition, we can generate a point $\bsy\sim \Ncal(\bsmu,\Sigma)$ by first generating $\bsx=(x_1,\ldots,x_s)$ with $x_j\sim \Ncal(0,1)$ and then transforming $\bsx$ by
\[ \bsy = \bsmu + \bsx A. \]
Even if the matrix $A$ does not have any further structure, a set of quadrature points can be generated in a fast way by following Algorithm~\ref{alg:tmc}.

For our experiments, we fix $\bsmu=(0,\ldots,0)$ and choose $A$ randomly such that $A$ is a random upper triangular matrix with positive diagonal entries.
Table~\ref{tbl:mvn} shows the average computational times for various values of $N$ and $s$.
As the theory predicts, the computational time for the standard Monte Carlo (stdMC) estimator scales as $Ns^2$, whereas that for the TMC estimator it approximately scales as $Ns\log s$. 
For low-dimensional cases up to $s=256$, the stdMC estimator is faster to compute than the TMC estimator. However, as the dimension $s$ increases, the relative speed of the TMC estimator also increases. For the case $s=2048$, the TMC is approximately 60 times faster than the stdMC.
This result indicates that the TMC estimator is useful in high-dimensional settings for generating normally distributed quadrature points for a general covariance matrix.

\subsection{1D differential equation with uniform random coefficients}
Let us consider the ODE
\begin{align*}
& -\frac{\mathrm{d}}{\mathrm{d}x}\left( a(x,\bsy)\frac{\mathrm{d}}{\mathrm{d}x}u(x,\bsy)\right) = g(x)\equiv 1 \\
& \qquad \qquad \qquad \text{for $x\in (0,1)$ and $\bsy\in \left[ -\frac{1}{2}, \frac{1}{2}\right]^{\NN}$} \\
& u(x,\bsy) = 0\quad \text{for $x=0,1$} \\
& a(x,\bsy) = 2+\sum_{j=1}^{\infty}y_j\frac{\sin(2\pi jx)}{j^{3/2}}.
\end{align*}
In order to solve this ODE approximately with the finite element method, 
we consider a system of hat functions
\[ \phi_m(x) = \begin{cases} (x-x_{m-1})M & \text{if $x_{m-1}\leq x\leq x_m$,} \\ (x_{m+1}-x)M & \text{if $x_m\leq x\leq x_{m+1}$,} \\ 0 & \text{otherwise,}\end{cases} \]
for $m=1,\ldots,M-1$ over $M+1$ equi-distributed nodes $x_m=m/M$ for $m=0,1,\ldots,M$, 
and truncate the infinite sum appearing in the random field $a$ by the first $s$ terms.
Therefore, we have three different parameters $N, M$ and $s$.

Given the boundary condition on $u$, the approximate solution $u_M$ of the ODE for $y_1,\ldots,y_s\sim U[-1/2, 1/2]$ is given by
\[ u_M = \sum_{m=1}^{M-1}\hat{u}_m \phi_m(x), \]
with
\begin{equation}\label{eq:linear_system1} \left( \hat{u}_1,\ldots,\hat{u}_{M-1}\right)\cdot B(y_1,\ldots,y_s) = (\hat{g}_1,\ldots,\hat{g}_{M-1}), \end{equation}
for the symmetric stiffness matrix $B$ depending on $y_1,\ldots,y_s$ and the forcing vector with entries $\hat{g}_m=\int_0^1 g(x)\phi_m(x)\rd x=1/M$.
The entries of the matrix $B=(b_{k,\ell})_{k,\ell}\in \RR^{(M-1)\times (M-1)}$ are given by
\begin{align*}
 b_{k,\ell} & = \int_0^1 a(x,(y_1,\ldots,y_s,0,0,\ldots)){\phi}'_k(x){\phi}'_{\ell}(x)\rd x \\
 & = 2\int_0^1 {\phi}'_k(x){\phi}'_{\ell}(x)\rd x \\
 & \quad + \sum_{j=1}^{s}y_j\int_0^1 \frac{\sin(2\pi jx)}{j^{3/2}}{\phi}'_k(x){\phi}'_{\ell}(x)\rd x \\
 & =: a_{k,\ell}^{(0)}+\sum_{j=1}^{s}y_j a_{k,\ell}^{(j)}.
\end{align*}
Hence, by letting $A^{(j)} = (a_{k,\ell}^{(j)})_{k,\ell}\in \RR^{(M-1)\times (M-1)}$ for $j=0,1,\ldots,s$, we have
\[ B=A^{(0)}+\sum_{j=1}^{s}y_jA^{(j)}. \]

Here we note that every entry of $A^{(j)}$ can be explicitly calculated as
\[ a_{k,\ell}^{(0)}=\begin{cases} 4M & \text{if $k=\ell$,}\\ -2M & \text{if $|k-\ell|=1$,} \\ 0 & \text{otherwise,} \end{cases} \] 
and 
\[ a_{k,\ell}^{(j)}=\begin{cases} \frac{M^2}{\pi j^{5/2}}\sin \left( \frac{2\pi j}{M}\right)\sin\left( \frac{2\pi jk}{M}\right) & \text{if $k=\ell$,}\\- \frac{M^2}{\pi j^{5/2}}\sin \left( \frac{\pi j}{M}\right)\sin\left( \frac{\pi j(k+\ell)}{M}\right) & \text{if $|k-\ell|=1$,} \\ 0 & \text{otherwise,} \end{cases} \] 
for $1\leq j\leq s$.
Since each matrix $A^{(j)}$ is tridiagonal, the LU decomposition requires only $O(M)$ computational time to solve the system of linear equations \eqref{eq:linear_system1}. 
This way, it is clear that computing the matrix $B$ for $N$ Monte Carlo samples on $(y_1,\ldots,y_s)$ is computationally dominant for the whole process, 
and as shown in Section~3.2 of \cite{DKLS15}, the standard Monte Carlo method requires $O(NMs)$ arithmetic operations, 
whereas Algorithm~\ref{alg:tmc} can reduce them to $O(NM\log s)$.
It is expected that the TMC estimator brings substantial computational cost savings, particularly for large $s$.
For our experiments, we estimate the expectation of $u(1/2,\cdot)$.

Table~\ref{tbl:pde1} shows the results for various values of $N,M$ and $s$. In general, in the area of PDEs with random coefficients,
both $M$ and $s$ grow with $N$ so that the following three errors are balanced \citep{DKLS15}: the finite element discretization error, the truncation error on the random field $a$, the Monte Carlo error.
The optimal balancing depends on the decay rates of these errors. However, in our numerical experiments, we are only interested in how the computation times change for different relations between $M$, $s$, and $N$, and so we test different relations between those parameters (irrespective of what the optimal choice actually is).

Since computations are repeated 25 times independently for each pair, here we report the average of the estimation values and the variance of the estimator for two methods.
By comparing the mean values computed by two methods, we can confirm that the TMC estimator is also unbiased (just as the stdMC estimator is). The variances for both of the estimators decay with the rate $O(1/N)$, whereas the magnitude for the TMC estimator is approximately 2--5 larger than the stdMC estimator.
On the other hand, the computational time for the stdMC estimator increases with $N$ (equivalently, with $s$) significantly faster than the TMC estimator.
This increment behavior of the computation times indicates that computation of the stiffness matrix is the most computationally dominant part in this computation, 
and so the TMC estimator is quite effective in reducing the computation time.

\begin{table*}
\caption{Estimating the expectation of $u(1/2,\cdot)$ with various values of $N, M$ and $s$ using the standard Monte Carlo method and the TMC method for the uniform case. 
The average estimate, the variance of the estimator and the average computational time (in seconds) are shown for each method. 
The efficiency is defined by the ratio of the product of the variance and the computational time between two methods.}\label{tbl:pde1}
\begin{center}
\begin{tabular}{ |c|c|c|c|c|c|c|c| } 
 \hline
  & \multicolumn{3}{|c|}{stdMC} & \multicolumn{3}{|c|}{TMC} & \\ \hline \hline
  & mean & variance & time &  mean & variance & time & efficiency \\ \hline 
 $N$ & \multicolumn{7}{|c|}{$N=M=s$} \\ \hline 
 64 & 0.066 & 2.00$\cdot 10^{-8}$ & 0.002 & 0.066 & 4.39$\cdot 10^{-8}$ & 0.023 & 0.046 \\ 
 128 & 0.065 & 4.20$\cdot 10^{-9}$ & 0.020 & 0.065 & 1.47$\cdot 10^{-8}$ & 0.071 & 0.081 \\
 256 & 0.064 & 2.00$\cdot 10^{-9}$ & 0.189 & 0.064 & 7.11$\cdot 10^{-9}$ & 0.221 & 0.240 \\
 512 & 0.064 & 1.01$\cdot 10^{-9}$ & 1.455 & 0.064 & 2.27$\cdot 10^{-9}$ & 0.823 & 0.781 \\
1024 & 0.064 & 5.75$\cdot 10^{-10}$ & 19.720 & 0.064 & 8.45$\cdot 10^{-10}$ & 3.202 & 4.190 \\
2048 & 0.064 & 1.96$\cdot 10^{-10}$ & 217.303 & 0.064 & 5.51$\cdot 10^{-10}$ & 12.069 & 6.405 \\
4096 & 0.064 & 9.50$\cdot 10^{-11}$ & 2367.836 & 0.064 & 2.36$\cdot 10^{-10}$ & 57.402 & 16.612 \\ \hline
 $N$ & \multicolumn{7}{|c|}{$N=M^2=s$} \\ \hline 
256 & 0.076 & 3.34$\cdot 10^{-8}$ & 0.005 & 0.076 & 1.17$\cdot 10^{-7}$ & 0.013 & 0.125 \\
512 & 0.072 & 7.88$\cdot 10^{-9}$ & 0.042 & 0.072 & 2.44$\cdot 10^{-8}$ & 0.031 & 0.441 \\
1024 & 0.070 & 2.77$\cdot 10^{-9}$ & 0.298 & 0.070 & 4.98$\cdot 10^{-9}$ & 0.087 & 1.892 \\
2048 & 0.068 & 4.78$\cdot 10^{-10}$ & 1.871 & 0.068 & 2.07$\cdot 10^{-9}$ & 0.235 & 1.838 \\
4096 & 0.066 & 3.66$\cdot 10^{-10}$ & 10.965 & 0.066 & 6.48$\cdot 10^{-10}$ & 0.766 & 8.088 \\
8192 & 0.066 & 6.70$\cdot 10^{-11}$ & 75.863 & 0.066 & 2.28$\cdot 10^{-10}$ & 2.390 & 9.329 \\
16384 & 0.065 & 2.20$\cdot 10^{-11}$ & 963.569 & 0.065 & 9.70$\cdot 10^{-11}$ & 7.033 & 31.073 \\ \hline
 $N$ & \multicolumn{7}{|c|}{$N=2M=2s$} \\ \hline 
64 & 0.069 & 3.06$\cdot 10^{-8}$ & 0.001 & 0.070 & 9.98$\cdot 10^{-8}$ & 0.017 & 0.013 \\
128 & 0.066 & 6.16$\cdot 10^{-9}$ & 0.004 & 0.066 & 2.36$\cdot 10^{-8}$ & 0.044 & 0.026 \\
256 & 0.065 & 2.15$\cdot 10^{-9}$ & 0.041 & 0.065 & 9.24$\cdot 10^{-9}$ & 0.135 & 0.070 \\
512 & 0.064 & 7.61$\cdot 10^{-10}$ & 0.326 & 0.064 & 2.58$\cdot 10^{-9}$ & 0.449 & 0.214 \\
1024 & 0.064 & 3.86$\cdot 10^{-10}$ & 2.681 & 0.064 & 8.95$\cdot 10^{-10}$ & 1.413 & 0.818 \\
2048 & 0.064 & 1.72$\cdot 10^{-10}$ & 35.731 & 0.064 & 5.67$\cdot 10^{-10}$ & 5.648 & 1.919 \\
4096 & 0.064 & 7.84$\cdot 10^{-11}$ & 444.156 & 0.064 & 2.41$\cdot 10^{-10}$ & 23.480 & 6.144 \\ \hline
 $N$ & \multicolumn{7}{|c|}{$N=2M^2=2s$} \\ \hline 
512 & 0.076 & 1.39$\cdot 10^{-8}$ & 0.011 & 0.076 & 4.37$\cdot 10^{-8}$ & 0.026 & 0.137 \\
1024 & 0.072 & 3.25$\cdot 10^{-9}$ & 0.064 & 0.072 & 9.13$\cdot 10^{-9}$ & 0.062 & 0.364 \\
2048 & 0.070 & 7.20$\cdot 10^{-10}$ & 0.534 & 0.070 & 3.26$\cdot 10^{-9}$ & 0.183 & 0.645 \\
4096 & 0.068 & 2.23$\cdot 10^{-10}$ & 3.204 & 0.068 & 9.55$\cdot 10^{-10}$ & 0.524 & 1.427 \\
8192 & 0.066 & 1.41$\cdot 10^{-10}$ & 20.888 & 0.066 & 3.01$\cdot 10^{-10}$ & 1.596 & 6.133 \\
16384 & 0.066 & 3.60$\cdot 10^{-11}$ & 159.880 & 0.066 & 1.22$\cdot 10^{-10}$ & 4.618 & 10.216 \\
32768 & 0.065 & 1.02$\cdot 10^{-11}$ & 2023.179 & 0.065 & 5.80$\cdot 10^{-11}$ & 13.586 & 26.144 \\ \hline
\end{tabular}
\end{center}
\end{table*}

As is standard (see for instance Chapter~8 of \cite{Owenxx}), we measure the relative efficiency of the TMC estimator compared to the stdMC estimator by the ratio
\[ \frac{T_{\MC}\sigma^2_{\MC}}{T_{\TMC}\sigma^2_{\TMC}},\]
where $T_{\bullet}$ and $\sigma^2_{\bullet}$ denote the computational time spent and the variance, respectively, for the estimators $\bullet\in \{\MC,\TMC\}$.
As shown in the rightmost column of Table~\ref{tbl:pde1}, 
the relative efficiency is smaller than 1 for low-dimensional cases, 
which means that we do not gain any benefit from using the TMC estimator.
However, because of the substantial computational time savings, 
the efficiency increases significantly for large $s$ where it goes well beyond 1.

\subsection{1D differential equation in the log-normal case}
Let us move on to an ODE with the log-normal random coefficients:
\begin{align*}
& -\frac{\mathrm{d}}{\mathrm{d}x}\left( a(x,\bsy)\frac{\mathrm{d}}{\mathrm{d}x}u(x,\bsy)\right) = g(x)\equiv 1\\
& \qquad \qquad \qquad \text{for $x\in (0,1)$ and $y_j\sim \text{i.i.d.\ } N(0,1)$} \\
& u(x,\bsy) = 0\quad \text{for $x=0,1$} \\
& a(x,\bsy) = \exp\left(\sum_{j=1}^{\infty}y_j\frac{\sin(2\pi jx)}{j^{2}}\right) .
\end{align*}
Similarly to the uniform case, we truncate the infinite sum appearing in the random field $a$ by the first $s$ terms. A similar test case was also used in Section~4.3 of \cite{DKLS15}.\footnote{We point out that in Section~4.3 of \cite{DKLS15} the authors replaced the normal distribution by a uniform distribution. However, a normal distribution could have been used in \cite[Section~4.3]{DKLS15} as well, for instance by shifting the QMC points in $[0,1]^s$ in the quadrature rule by $(1/(2N), \ldots, 1/(2N))$ and applying the inverse CDF to the shifted points. Doing so avoids the point $(0,\ldots, 0) \in [0,1]^s$ which would get transformed to $(-\infty, \ldots, -\infty)$. Note that this shift does not effect the fast QMC matrix vector product, which can still be applied in the usual way.}

Now, as the entries of the stiffness matrix $B=(b_{k,\ell})_{k,\ell}\in \RR^{(M-1)\times (M-1)}$ cannot be expressed simply as a linear sum of $y_1,y_2,\ldots$,
we need to approximate the integral by using some quadrature formulas, except for the case $|k-\ell|\geq 2$ where we just have $b_{k,\ell}=0$.
Denoting the quadrature nodes and the corresponding weights by $x_{1,k,\ell},\ldots,x_{I,k,\ell}$ and $\omega_{1,k,\ell},\ldots,\omega_{I,k,\ell}$, the entry $b_{k,\ell}$ is approximated by
\[ b_{k,\ell} \approx \hat{b}_{k,\ell} = \sum_{i=1}^{I}\omega_{i,k,\ell}\exp(\theta_{i,k,\ell}){\phi}'_k(x_{i,k,\ell}){\phi}'_{\ell}(x_{i,k,\ell}),\]
where
\[ \theta_{i,k,\ell}= \sum_{j=1}^{s}y_j\frac{\sin(2\pi jx_{i,k,\ell})}{j^{2}}, \]
for the case $|k-\ell|\leq 1$. As stated in Section~3.2 of \cite{DKLS15}, computing $\theta_{i,k,\ell}$ for all $1\leq i\leq I$ and all $N$ Monte Carlo samples on $(y_1,\ldots,y_s)$ 
can be done in a fast way by using the TMC estimator, requiring $O(INM\log s)$ arithmetic operations. 
On the other hand, we need $O(INMs)$ arithmetic operations when using the standard Monte Carlo estimator.
This way, the log-normal case poses a further computational challenge compared to the uniform case, so that it would be interesting to see whether the TMC is still effective.
In this paper we apply the 3-point closed Newton-Cotes formula with nodes at $x_{k-1},x_k,x_{k+1}$ if $k=\ell$ and the 2-point closed one with nodes at $x_{(k+\ell-1)/2},x_{(k+\ell+1)/2}$ if $|k-\ell|=1$ .
Again we estimate the expectation of $u(1/2,\cdot)$.

Table~\ref{tbl:pde2} shows the results for various values of $N,M$ and $s$.
Similarly to the uniform case, we see that the TMC estimator is unbiased as the mean values agree well with the results for the stdMC estimator.
In this case, however, the variances for both of the estimators do not necessarily decay with the rate $O(1/N)$. This is possible because we increase $M$ and $s$ simultaneously with $N$, which may lead to an increment of the variance of $u_M(1/2, \cdot)$ in a non-asymptotic range of $N$.
As $N$ increases further, it is expected that the variance of $u_M(1/2, \cdot)$ stays almost the same and that the variances for both of the estimators tend to decay with the rate $O(1/N)$.
Moreover, it can be seen from the table that the magnitude of the variance for the TMC estimator is comparable to that of the stdMC estimator for many choices of $N, M, s$. 
As expected, the computational time for the stdMC estimator increases with $s$ significantly faster than the TMC estimator,
and it is clear that computation of the stiffness matrix takes most of the computational time, even for the log-normal case.
The relative efficiency of the TMC estimator over the stdMC estimator gets larger as $N$ (or, equivalently $s$) increases.

\begin{table*}
\caption{Estimating the expectation of $u(1/2,\cdot)$ with various values of $N, M$ and $s$ using the standard Monte Carlo method and the TMC method for the log-normal case.}\label{tbl:pde2}
\begin{center}
\begin{tabular}{ |c|c|c|c|c|c|c|c| } 
 \hline
  & \multicolumn{3}{|c|}{stdMC} & \multicolumn{3}{|c|}{TMC} & \\ \hline \hline
  & mean & variance & time &  mean & variance & time & efficiency \\ \hline 
 $N$ & \multicolumn{7}{|c|}{$N=M=s$} \\ \hline 
 64 & 0.018 & 8.34$\cdot 10^{-8}$ & 0.073 & 0.018 & 2.97$\cdot 10^{-7}$ & 0.079 & 0.260 \\ 
 128 & 0.018 & 1.17$\cdot 10^{-8}$ & 0.580 & 0.018 & 1.76$\cdot 10^{-7}$ & 0.171 & 0.227 \\
 256 & 0.018 & 1.55$\cdot 10^{-8}$ & 4.650 & 0.018 & 7.74$\cdot 10^{-8}$ & 0.549 & 1.694 \\
 512 & 0.018 & 2.11$\cdot 10^{-8}$ & 36.640 & 0.018 & 3.35$\cdot 10^{-8}$ & 2.260 & 10.230 \\
1024 & 0.018 & 2.65$\cdot 10^{-8}$ & 310.415 & 0.018 & 6.63$\cdot 10^{-8}$ & 10.385 & 11.956 \\
2048 & 0.018 & 1.23$\cdot 10^{-8}$ & 2391.495 & 0.018 & 3.06$\cdot 10^{-8}$ & 41.504 & 23.262 \\
4096 & 0.018 & 2.33$\cdot 10^{-8}$ & 20739.823 & 0.018 & 2.03$\cdot 10^{-8}$ & 182.282 & 131.012 \\ \hline
 $N$ & \multicolumn{7}{|c|}{$N=M^2=s$} \\ \hline 
256 & 0.017 & 8.08$\cdot 10^{-9}$ & 0.301 & 0.017 & 1.24$\cdot 10^{-8}$ & 0.032 & 6.069 \\
512 & 0.018 & 1.44$\cdot 10^{-8}$ & 1.680 & 0.018 & 9.75$\cdot 10^{-9}$ & 0.093 & 26.813 \\
1024 & 0.018 & 2.68$\cdot 10^{-8}$ & 9.762 & 0.018 & 1.24$\cdot 10^{-8}$ & 0.277 & 76.387 \\
2048 & 0.018 & 8.59$\cdot 10^{-9}$ & 56.480 & 0.018 & 5.37$\cdot 10^{-8}$ & 0.615 & 14.695 \\
4096 & 0.018 & 1.89$\cdot 10^{-8}$ & 317.294 & 0.018 & 2.98$\cdot 10^{-8}$ & 2.212 & 91.011 \\
8192 & 0.018 & 6.15$\cdot 10^{-8}$ & 1865.110 & 0.018 & 3.27$\cdot 10^{-8}$ & 7.094 & 494.909 \\
16384 & 0.018 & 1.91$\cdot 10^{-8}$ & 9964.289 & 0.018 & 2.35$\cdot 10^{-8}$ & 16.548 & 473.083 \\ \hline
 $N$ & \multicolumn{7}{|c|}{$N=2M=2s$} \\ \hline 
64 & 0.018 & 1.16$\cdot 10^{-7}$ & 0.021 & 0.018 & 1.83$\cdot 10^{-7}$ & 0.046 & 0.288 \\
128 & 0.018 & 9.58$\cdot 10^{-8}$ & 0.163 & 0.018 & 1.46$\cdot 10^{-7}$ & 0.126 & 0.844 \\
256 & 0.018 & 5.09$\cdot 10^{-8}$ & 1.279 & 0.018 & 7.30$\cdot 10^{-8}$ & 0.394 & 2.264 \\
512 & 0.018 & 2.40$\cdot 10^{-8}$ & 9.997 & 0.018 & 3.33$\cdot 10^{-8}$ & 1.089 & 6.616 \\
1024 & 0.018 & 9.53$\cdot 10^{-8}$ & 82.822 & 0.018 & 6.00$\cdot 10^{-8}$ & 4.914 & 26.758 \\
2048 & 0.018 & 1.18$\cdot 10^{-8}$ & 644.228 & 0.018 & 2.97$\cdot 10^{-8}$ & 19.265 & 13.243 \\
4096 & 0.018 & 1.45$\cdot 10^{-8}$ & 5127.910 & 0.018 & 1.72$\cdot 10^{-8}$ & 73.837 & 58.506 \\ \hline
 $N$ & \multicolumn{7}{|c|}{$N=2M^2=2s$} \\ \hline 
512 & 0.017 & 1.81$\cdot 10^{-8}$ & 0.597 & 0.017 & 5.08$\cdot 10^{-9}$ & 0.060 & 35.776 \\
1024 & 0.018 & 5.07$\cdot 10^{-9}$ & 3.370 & 0.018 & 6.70$\cdot 10^{-9}$ & 0.151 & 16.834 \\
2048 & 0.018 & 9.32$\cdot 10^{-9}$ & 22.059 & 0.018 & 7.58$\cdot 10^{-9}$ & 0.393 & 68.996 \\
4096 & 0.018 & 8.18$\cdot 10^{-9}$ & 123.556 & 0.018 & 5.70$\cdot 10^{-9}$ & 1.187 & 149.507 \\
8192 & 0.018 & 4.32$\cdot 10^{-9}$ & 646.786 & 0.018 & 4.16$\cdot 10^{-9}$ & 4.738 & 141.992 \\
16384 & 0.018 & 2.47$\cdot 10^{-9}$ & 3546.950 & 0.018 & 5.11$\cdot 10^{-9}$ & 11.673 & 146.925 \\
32768 & 0.018 & 1.81$\cdot 10^{-9}$ & 20018.280 & 0.018 & 3.97$\cdot 10^{-9}$ & 33.323 & 274.032 \\ \hline
\end{tabular}
\end{center}
\end{table*}

\subsection{2D differential equation with random coefficients}
Following Section~4 of \cite{DKLS16}, our last example considers the following two-dimensional ODE with the uniform random coefficients:
\begin{align*}
& -\nabla \cdot \left( a(\bsx,\bsy)\nabla u(\bsx,\bsy)\right) = g(\bsx)\equiv 100 x_1 \\
& \qquad \qquad \qquad \text{for $\bsx\in [0,1]^2$ and $\bsy\in \left[ -\frac{1}{2}, \frac{1}{2}\right]^{\NN}$} \\
& u(\bsx,\bsy) = 0\quad \text{for $\bsx\in \partial\left( [0,1]^2\right)$} \\
& a(\bsx,\bsy) = 1+\sum_{j=1}^{\infty}y_j\frac{\sin(\pi k_{j,1}x_1)\sin(\pi k_{j,2}x_2)}{(k_{j,1}^2+k_{j,2}^2)^2}.
\end{align*}
Here the elements $(k_{j,1},k_{j,2})_j$ are ordered in such a way that $\{(k_{j,1},k_{j,2})\mid j\in \NN \} = \NN\times \NN$ and
\[ \frac{1}{(k_{j,1}^2+k_{j,2}^2)^2}\geq \frac{1}{(k_{j+1,1}^2+k_{j+1,2}^2)^2}\quad \text{for all $j\in \NN$.}\]
In cases where equality holds, the ordering is arbitrary. 
We solve this ODE by a finite element discretization.
Given the boundary condition on $u$, we exclude the basis functions along the boundary
and use a set of local piecewise linear hat functions $\{\phi_{p,q}: 0< p,q< M\}$ as the system of basis functions.
The basis function $\phi_{p,q}$ has center at $(p/M,q/M)$ and the support is given by the following hexagon:
\begin{center}
\begin{tikzpicture}[domain=-4:4, very thick]
\draw[thick] (0,0)--(2,0);
\path (0,0) node[above left] {$\left(\frac{p}{M},\frac{q}{M}\right)$};
\path (2,0) node[above right] {$\left(\frac{p+1}{M},\frac{q}{M}\right)$};
\draw[thick] (0,0)--(-2,0);
\path (-2,0) node[above left] {$\left(\frac{p-1}{M},\frac{q}{M}\right)$};
\draw[thick] (0,0)--(0,2);
\path (0,2) node[above left] {$\left(\frac{p}{M},\frac{q+1}{M}\right)$};
\draw[thick] (0,0)--(0,-2);
\path (0,-2) node[below left] {$\left(\frac{p}{M},\frac{q-1}{M}\right)$};
\draw[thick] (0,0)--(2,2);
\path (2,2) node[above right] {$\left(\frac{p+1}{M},\frac{q+1}{M}\right)$};
\draw[thick] (0,0)--(-2,-2);
\path (-2,-2) node[below left] {$\left(\frac{p-1}{M},\frac{q-1}{M}\right)$};
\draw[thick] (2,0)--(2,2);
\draw[thick] (0,2)--(2,2);
\draw[thick] (-2,-2)--(-2,0);
\draw[thick] (-2,-2)--(0,-2);
\draw[thick] (-2,0)--(0,2);
\draw[thick] (0,-2)--(2,0);
\end{tikzpicture}
\end{center}

The approximate solution $u_M$ of the ODE in this setting is given by
\[ u_M = \sum_{p,q=1}^{M-1}\hat{u}_{p,q} \phi_{p,q}(x_1,x_2), \]
with
\begin{align}
& \left( \hat{u}_{1,1},\ldots,\hat{u}_{1,M-1},\ldots,\hat{u}_{M-1,1},\ldots,\hat{u}_{M-1,M-1}\right)\notag \\
& \quad \cdot B(y_1,\ldots,y_s) \notag \\
& = (\hat{g}_{1,1},\ldots,\hat{g}_{1,M-1},\ldots,\hat{g}_{M-1,1},\ldots,\hat{g}_{M-1,M-1}), \label{eq:linear_system2} \end{align}
for the stiffness matrix $B$ depending on $y_1,\ldots,y_s$ and the forcing vector with entries $\hat{g}_{p,q}=\int_{[0,1]^2} g(\bsx)\phi_{p,q}(\bsx)\rd \bsx$,
which is explicitly computable.
By truncating the infinite sum appearing in the random field $a$ to the first $s$ terms, 
the entries of the matrix $B=(b_{(p,q),(p',q')})_{p,q,p',q'}$ are given by
\begin{align*}
 & b_{(p,q),(p',q')} \\
 & = \int_{[0,1]^2} a(\bsx,(y_1,\ldots,y_s,0,\ldots)) \nabla \phi_{p,q}(\bsx)\nabla \phi_{p',q'}(\bsx)\rd \bsx \\
 & = \int_{[0,1]^2}\nabla \phi_{p,q}(\bsx)\nabla \phi_{p',q'}(\bsx)\rd \bsx \\
 & \quad +\sum_{j=1}^{s}y_j\int_{[0,1]^2}\frac{\sin(\pi k_{j,1}x_1)\sin(\pi k_{j,2}x_2)}{(k_{j,1}^2+k_{j,2}^2)^2} \\
 & \qquad \qquad \qquad \times \nabla \phi_{p,q}(\bsx)\nabla \phi_{p',q'}(\bsx)\rd \bsx\\
 & =: a_{(p,q),(p',q')}^{(0)}+\sum_{j=1}^{s}y_j a_{(p,q),(p',q')}^{(j)}.
\end{align*}
Similarly to the one-dimensional case, every term $a_{(p,q),(p',q')}^{(j)}$ is explicitly calculable.
Hence, by letting $A^{(j)} = (a_{(p,q),(p',q')}^{(j)})_{p,q,p',q'}$ for $j=0,1,\ldots,s$, we have
\[ B=A^{(0)}+\sum_{j=1}^{s}y_jA^{(j)}. \]

This time, unlike the one-dimensional uniform case, each matrix $A^{(j)}$ is no longer tridiagonal but has a bandwidth of $O(M)$.
Instead of the LU decomposition, we use the BiCGSTAB method without preconditioner to solve the system of linear equations \eqref{eq:linear_system2}. 
We refer to \cite{Sbook} for detailed information on the BiCGSTAB method.
Since this is an iterative method, the resulting solution $u_M$ is not precise and the required computational time depends on the stopping criterion we use.
Moreover, such an additional computational burden makes it a priori unclear whether TMC can significantly reduce the computational time as a whole.
However, in all our tests, computing the matrix $B$ for $N$ Monte Carlo samples on $(y_1,\ldots,y_s)$ 
remained the computationally dominant part, and the relative efficiency of the TMC estimator did not strongly depend on the stopping criterion.
For our experiments, we estimate the expectation of $u((1/2,1/2),\cdot)$.

Table~\ref{tbl:pde3} shows the results for various values of $N,M$ and $s$.
The stopping criterion of the BiCGSTAB method is set such that the relative 2-norm of the residual is less than $10^{-5}$.
We see that the TMC estimator is unbiased, and that the variances for both of the estimators approximately decay with the rate $O(1/N)$.
Similarly to the one-dimensional log-normal case, the magnitude for the TMC estimator is comparable to that of the stdMC estimator.
The computational time for the stdMC estimator increases with $s$ much faster than the TMC estimator, 
resulting in a substantial relative efficiency of the TMC estimator over the stdMC estimator for larger $s$.

\begin{table*}
\caption{Estimating the expectation of $u((1/2,1/2),\cdot)$ with various values of $N, M$ and $s$ using the standard Monte Carlo method and the TMC method for the two-dimensional uniform case.}\label{tbl:pde3}
\begin{center}
\begin{tabular}{ |c|c|c|c|c|c|c|c| } 
 \hline
  & \multicolumn{3}{|c|}{stdMC} & \multicolumn{3}{|c|}{TMC} & \\ \hline \hline
  & mean & variance & time &  mean & variance & time & efficiency \\ \hline 
 $N$ & \multicolumn{7}{|c|}{$N=M^2=s$} \\ \hline 
 16 & 3.516 & 1.09$\cdot 10^{-3}$ & 0.0002 & 3.514 & 7.17$\cdot 10^{-4}$ & 0.0020 & 0.119 \\ 
 64 & 3.647 & 2.16$\cdot 10^{-4}$ & 0.009 & 3.643 & 2.43$\cdot 10^{-4}$ & 0.027 & 0.302 \\ 
 256 & 3.676 & 8.03$\cdot 10^{-5}$ & 1.248 & 3.678 & 8.84$\cdot 10^{-5}$ & 0.406 & 2.792 \\
1024 & 3.685 & 1.89$\cdot 10^{-5}$ & 127.366 & 3.686 & 1.25$\cdot 10^{-5}$ & 8.369 & 22.925 \\
4096 & 3.688 & 3.62$\cdot 10^{-6}$ & 14773.594 & 3.688 & 3.74$\cdot 10^{-6}$ & 228.490 & 62.562 \\ \hline
 $N$ & \multicolumn{7}{|c|}{$N=2M^2=2s$} \\ \hline 
 32 & 3.517 & 3.97$\cdot 10^{-4}$ & 0.0004 & 3.515 & 3.20$\cdot 10^{-4}$ & 0.0035 & 0.132 \\ 
 128 & 3.646 & 6.81$\cdot 10^{-5}$ & 0.019 & 3.646 & 1.26$\cdot 10^{-4}$ & 0.057 & 0.174 \\ 
 512 & 3.676 & 3.41$\cdot 10^{-5}$ & 2.773 & 3.677 & 3.35$\cdot 10^{-5}$ & 0.769 & 3.670 \\
2048 & 3.686 & 5.67$\cdot 10^{-6}$ & 260.868 & 3.686 & 7.59$\cdot 10^{-6}$ & 16.375 & 11.893 \\
8192 & 3.688 & 1.55$\cdot 10^{-6}$ & 47914.800 & 3.688 & 1.66$\cdot 10^{-6}$ & 494.336 & 90.562 \\ \hline
 $N$ & \multicolumn{7}{|c|}{$2N=M^2=2s$} \\ \hline 
 8 & 3.517 & 1.83$\cdot 10^{-3}$ & 0.0001 & 3.513 & 1.89$\cdot 10^{-3}$ & 0.0021 & 0.046 \\ 
 32 & 3.646 & 4.06$\cdot 10^{-4}$ & 0.003 & 3.639 & 3.19$\cdot 10^{-4}$ & 0.020 & 0.194 \\ 
 128 & 3.679 & 7.52$\cdot 10^{-5}$ & 0.291 & 3.679 & 1.26$\cdot 10^{-4}$ & 0.771 & 0.771 \\
 512 & 3.685 & 2.61$\cdot 10^{-5}$ & 21.702 & 3.686 & 3.35$\cdot 10^{-5}$ & 4.135 & 4.087 \\
2048 & 3.687 & 3.63$\cdot 10^{-6}$ & 2385.625 & 3.688 & 7.59$\cdot 10^{-6}$ & 105.021 & 10.862 \\ \hline
 $N$ & \multicolumn{7}{|c|}{$2N=M^2=s$} \\ \hline 
 32 & 3.522 & 4.11$\cdot 10^{-4}$ & 0.0005 & 3.515 & 3.21$\cdot 10^{-4}$ & 0.0035 & 0.184 \\ 
 128 & 3.646 & 7.54$\cdot 10^{-5}$ & 0.034 & 3.646 & 1.27$\cdot 10^{-4}$ & 0.041 & 0.490 \\ 
 512 & 3.676 & 2.61$\cdot 10^{-5}$ & 5.259 & 3.677 & 3.35$\cdot 10^{-5}$ & 0.749 & 5.468 \\
2048 & 3.685 & 3.63$\cdot 10^{-6}$ & 742.863 & 3.686 & 7.59$\cdot 10^{-6}$ & 16.304 & 21.790 \\
8192 & 3.688 & 1.96$\cdot 10^{-6}$ & 115052.392 & 3.688 & 1.66$\cdot 10^{-6}$ & 510.611 & 266.587 \\ \hline
\end{tabular}
\end{center}
\end{table*}

\section{Conclusion}
Motivated by applications to partial differential equations with random coefficients, we introduced the Toeplitz Monte Carlo estimator in this paper.
The theoretical analysis of the TMC estimator shows that it is unbiased and the variance converges with the canonical $1/N$ rate.
From the viewpoint of tractability in the weighted $L_2$ space, the TMC estimator requires a stronger condition on the weights than the standard Monte Carlo estimator to achieve strong polynomial tractability.
Through a series of numerical experiments for PDEs with random coefficients, we observed that the TMC estimator is quite effective in reducing
necessary computational times and the relative efficiency over the standard Monte Carlo estimator is substantial, particularly for high-dimensional settings.

We leave the following topics open for future research.
\begin{itemize}
\item Combination with variance reduction techniques: 
In our numerical experiments for the one-dimensional uniform case, the variance of the TMC estimator tends to be much larger than the standard Monte Carlo estimator.
To address this issue, it would be reasonable to consider applying some variance reduction techniques to the TMC estimator 
such that the resulting algorithm still allows for a fast matrix-vector multiplication. In particular, it would be interesting to design a variance reduction technique which reduces the term
\[ \sum_{\ell=1}^{\min(s,N)-1}(N-\ell)\sum_{\emptyset \neq u\subseteq [1:s-\ell]}I_{\rho_{|u|}}(f_uf_{u+\ell}).\]
\item Multilevel Toeplitz Monte Carlo (MLTMC): 
Recently, multilevel Monte Carlo (MLMC) methods from \cite{G08} have been studied intensively in the context of PDEs with random coefficients, see for instance \cite{CGST11} and \cite{TRGU13}.
By combining the TMC estimator with MLMC, the dependence of the total computational complexity not only on the truncation dimension $s$ 
but also on the discretization parameter $M$ can be possibly weakened.
\item Applications to different areas:
Although this work has been originally motivated by PDEs with random coefficients, the TMC estimator itself is more general and can be 
applied in different contexts as well. Since generating points from multivariate normal distribution is quite common, 
for instance, in financial engineering, operations research and machine learning, one may apply the TMC estimator also to those areas.
\end{itemize}

\begin{acknowledgements}
Josef Dick is partly supported by the Australian Research Council Discovery Project DP190101197. Takashi Goda is partly supported by JSPS KAKENHI Grant Number 20K03744. J.D. would like to thank T.G. for his hospitality during his visit at U. Tokyo.
\end{acknowledgements}


\begin{thebibliography}{20}
\providecommand{\natexlab}[1]{#1}
\providecommand{\url}[1]{\texttt{#1}}
\providecommand{\urlprefix}{URL }

\bibitem[{Cliffe et~al.(2011)Cliffe, Giles, Scheichl, \protect\BIBand{} Teckentrup}]{CGST11} 
Cliffe, K.~A., Giles, M.~B., Scheichl, R., Teckentrup, A.~L.: Multilevel {M}onte {C}arlo methods and applications to elliptic {PDE}s with random coefficients. Comput. Vis. Sci. 14, 3--15 (2011)
\bibitem[{Devroye(1986)}]{Dev86} 
Devroye, L.: Non-Uniform Random Variate Generation. Springer, New York (1986)
\bibitem[{Dick et~al.(2015)Dick, Kuo, Le Gia, \protect\BIBand{} Schwab}]{DKLS15} 
Dick, J., Kuo, F.~Y., Le Gia, Q.~T., Schwab, C.: Fast {QMC} matrix-vector multiplication. SIAM J. Sci. Comput. 37, A1436--A1450 (2015)
\bibitem[{Dick et~al.(2016)Dick, Kuo, Le Gia, \protect\BIBand{} Schwab}]{DKLS16}
Dick, J., Kuo, F.~Y., Le Gia, Q.~T., Schwab, C.: Multilevel higher order {QMC} {P}etrov-{G}alerkin discretization for affine parametric operator equations. SIAM J. Numer. Anal. 54, 2541--2568 (2016)
\bibitem[{Feischl et~al.(2018)Feischl, Kuo, \protect\BIBand{} Sloan}]{FKS18} 
Feischl, M., Kuo, F.~Y., Sloan, I.~H.: Fast random field generation with $H$-matrices. Numer. Math. 140, 639--676 (2018)
\bibitem[{Frigo \protect\BIBand{} Johnson(2005)}]{FJ05} 
Frigo, M., Johnson, S.~G.: The design and implementation of {FFTW}3. Proc. IEEE 93, 216--231 (2005)
\bibitem[{Giles(2008)}]{G08}
Giles, M.~B.: Multilevel {M}onte {C}arlo path simulation. Oper. Res. 56, 607--617 (2008)
\bibitem[{Giles et~al.(2008)Giles, Kuo, Sloan, \protect\BIBand{} Waterhouse}]{GKSW08} 
Giles, M.~B., Kuo, F.~Y., Sloan, I.~H., Waterhouse, B.~J.: Quasi-{M}onte {C}arlo for finance applications. ANZIAM J. 50, C308--C323 (2008)
\bibitem[{Hoeffding(1948)}]{H48} 
Hoeffding, W.: A class of statistics with asymptotically normal distribution. Ann. Math. Statist. 19, 293--325 (1948)
\bibitem[{Kuo et~al.(2010)Kuo, Sloan, Wasilkowski, \protect\BIBand{} Wo\'{z}niakowski}]{KSWW10} 
Kuo, F.~Y., Sloan, I.~H., Wasilkowski, G., Wo\'{z}niakowski, H.: On decompositions of multivariate functions. Math. Comp. 79, 953--966 (2010)
\bibitem[{Novak \protect\BIBand{} Wo\'{z}niakowski(2010)}]{NWbook} 
Novak, E., Wo\'{z}niakowski, H.: Tractability of Multivariate Problems. Volume II: Standard Information for Functionals. EMS Tracts in Mathematics, 12. European Mathematical Society, Z\"{u}rich (2010)
\bibitem[{Owen(2019)}]{Owenxx} 
Owen, A.~B.: Monte Carlo Theory, Methods and Examples. \url{http://statweb.stanford.edu/~owen/mc/} (2019) [Last accessed 4 March 2020]
\bibitem[{Saad(2003)}]{Sbook} 
Saad, Y.: Iterative Methods for Sparse Linear Systems. 2nd Ed., SIAM, Philadelphia (2003)
\bibitem[{Sloan \protect\BIBand{} Wo\'{z}niakowski(1998)}]{SW98} 
Sloan, I.~H., Wo\'{z}niakowski, H.: When are quasi-Monte Carlo algorithms efficient for high-dimensional integrals? J. Complexity 14, 1--33 (1998)
\bibitem[{Sobol'(1993)}]{S93} 
Sobol', I.~M.: Sensitivity estimates for nonlinear mathematical models. Math. Mod. Comp. Exp. 1, 407--414 (1993)
\bibitem[{Teckentrup et~al.(2013)Teckentrup, Scheichl, Giles, \protect\BIBand{} Ullmann}]{TRGU13} 
Teckentrup, A.~L., Scheichl, R., Giles, M.~B., Ullmann, E.: Further analysis of multilevel {M}onte {C}arlo methods for elliptic {PDE}s with random coefficients. Numer. Math. 125, 569--600 (2013)
\end{thebibliography}


\end{document}